\newtheorem{Theorem}{Theorem}[section]
\newtheorem{Proposition}[Theorem]{Proposition}
\newtheorem{Lemma}[Theorem]{Lemma}
\newtheorem{Corollary}[Theorem]{Corollary}
\newtheorem{Remark}[Theorem]{Remark}
\newtheorem{Hypothesis}{Hypothesis}  
 \newcommand{\U}{\mathbb U}
\newcommand{\R}{\mathbb R}
\newcommand{\N}{\mathbb N}
\newcommand{\E}{\mathbb E}
\renewcommand{\P}{\mathbb P}
\newcommand{\Q}{\mathbb Q}
\newcommand{\A}{\mathbb A}
\newcommand{\X}{\mathbb X}
\newcommand{\Y}{\mathbb Y}
\def\ds{\displaystyle}
\def\blu{}
\def\nor{\color{black}}
\title{\bf  A mild Girsanov formula}
\newlength{\tempa}\setlength{\tempa}{6.27in}
\newlength{\tempb}\setlength{\tempb}{9.7in} 
\newcommand{\crossout}[1]{%
  \begingroup
  \sbox\z@{#1}%
  \dimen\z@=\wd\z@
  \dimen\tw@=\ht\z@
  \dimen\z@=.99626\dimen\z@   
  \dimen\tw@=.99626\dimen\tw@ 
  \edef\co@wd{\strip@pt\dimen\z@}
  \edef\co@ht{\strip@pt\dimen\tw@}
  \leavevmode
  \rlap{\pdfliteral{q 1 J 0.4 w 0 0 m \co@wd\space \co@ht\space l S Q}}%
  \rlap{\pdfliteral{q 1 J 0.4 w 0 \co@ht\space m \co@wd\space 0 l S Q}}%
  #1%
  \endgroup
}
\author{Giuseppe Da Prato\\
\small Scuola Normale Superiore, Pisa, Italy
\and  
\and Enrico Priola\\
\small University of Pavia, Pavia, Italy
\and
Luciano Tubaro\footnote{correspondig author}\\
\small University of Trento, Trento, Italy
}
\date{July 31, 2025 }
\begin{document} 
 \maketitle
\section*{} \hspace{4cm} \vbox{\hsize=11cm\it\footnotesize\noindent  Dedicated to the memory of our mentor and friend Giuseppe Da Prato, who was still thinking on this work until the day before his death (October 5, 2023).}
\  
  
 \begin{abstract}
 We consider the  SPDE$\colon dZ=(AZ+b(Z)) dt+dW(t),\,Z_0=x$,
 on a separable Hilbert space $H$, where $A\colon H\to H$ is self-adjoint $b\colon H\to H$ is  Lipschitz   continuous and $W$ is a cylindrical Wiener process on $H$. We determine, with the help of  a well-known  formula for nonlinear transformations  of Gaussian integrals due to R. Ramer \cite{Ra74}, an explicit representation  for the law of $Z_x$  in $C([0,T];H)$, see Theorem \ref{t3.1} below.

When $b$ is,  in addition, dissipative,  we determine  the invariant measure  $\nu$ of the  semigroup $P_t\varphi(x)=\E[\varphi(Z_x(t))]$,  the corresponding stationary process $Z_{\R}$. 
The final Section 5 is devoted to colored noise.

 \end{abstract}
\noindent {\bf 2000 Mathematics Subject Classification AMS}: 60H15, 60H07, 60H30.

\noindent {\bf Key words}:  Girsanov formula, Stochastic mild equations, Malliavin Calculus, Skorokhod integral, Stationary process.

 \newpage
 
 \tableofcontents

\newpage
\section{Introduction and setting of the problem}

Let  $H$ be a real separable Hilbert space, (norm $|\cdot|_H$, scalar product $\langle \cdot, \cdot\rangle_H$).
We are concerned with the following  stochastic differential equation on $H$,
\begin{equation}
\label{e1.1}
\left\{\begin{array}{l}
dZ_x(t)=(AZ_x(t)+b(Z_x(t))dt+dW(t),\quad t\ge 0,\\
\\
Z_x(0)=x\in H,
\end{array}\right.
\end{equation}
under the  following  assumptions.
\begin{Hypothesis}
\label{h1}
(i) $A\colon D(A)\subset H\to H$ is  a self--adjoint  operator  and there is $\omega>0$ such that
$\|e^{tA}\|_{\mathcal L (H)}\le e^{-\omega t},\, t\ge 0.$
Moreover,  $(-A)^{-1+\beta}$ is of trace class for some $\beta\in\, ]0,1[$.\medskip

\noindent(ii) $b\colon H\to H$ is  Lipschitz  continuous. 
\medskip


\noindent(iii) $W$ is an $H$--valued cylindrical Wiener process on a filtered probability space
$(\Omega, \mathcal F, (\mathcal F_t)_{t\ge 0},\P)$.
 \end{Hypothesis}

\bigskip
 
\noindent We shall denote by   $W_A$ the {\em stochastic convolution}
\begin{equation}
\label{e1.3}
  W_A(t)=\int_0^te^{(t-s)A}\, dW(s),\quad t\ge 0.
\end{equation}
Thanks to Hypothesis \ref{h1}(i), $W_A$  is a continuous process, see, e.g., \cite[Theorem 2.6]{Da04}.

We fix   $T>0$ and set $E_{[0,T]}=C([0,T];H)$, the space of all continuous mapping $[0,T]\to H$ endowed with the sup norm; we shall denote by $\mathcal B(E_{[0,T]})$  the $\sigma$--algebra of all Borel subsets of $E_{[0,T]}$. Moreover,  $ B_b(E_{[0,T]})$ will represent the space of all mappings $E_{[0,T]}\to \R$  which are Borel and bounded. 
 
The law of $W_A$ in $E_{[0,T]}$ is Gaussian $\mathcal N_{\Q_T}$, that is, it  has mean $0$ and covariance $\Q_T$;  it will be described in Section 2 below.
 \bigskip

The  goal of this paper is to represent the 
law of $Z_x$ on $E_{[0,T]}$, for all $x\in H$,  as a suitable integral with respect to  $\mathcal N_{\Q_T}$, that we shall call a {\em mild Girsanov} formula, and to  deduce some consequences from this identity.\bigskip

We shall proceed as follows. Setting $Z_x(t)-e^{tA}x=:K_x(t),\, t\in[0,T],$ equation \eqref{e1.1} becomes
\begin{equation}
\label{e1.4}
K_x(t)=\int_0^te^{(t-s)A}\,b(K_x(s)+e^{sA}x)ds+ W_A(t),\quad t\in[0,T].
\end{equation}
Then we associate with  the stochastic equation \eqref{e1.4}  the following deterministic one,
\begin{equation}
\label{e1.5}
 k_x(t)=\int_0^te^{(t-s)A}b(k_x(s)+e^{sA}x)ds+h(t),\quad h\in E_{[0,T]},\,t\in [0,T],
\end{equation}
that, by a  standard fixed point argument,  has a unique   solution in $E_{[0,T]}$  that we denote by $k_x(t),\,t\in [0,T]$. 
Next, for  every $x\in H$  we consider     the mapping 
\begin{equation}
\label{e1.6}
F_x\colon E_{[0,T]}\to E_{[0,T]},\quad   h\to F_x(h)=k_x.
\end{equation}
We shall denote by $G_x: E_{[0,T]}\to E_{[0,T]}$ the inverse mapping  of $F_x$, which reads as follows
\begin{equation}
\label{e1.7}
[G_x(h)](t)=h(t)-\int_0^t e^{(t-s)A}\,b(h(s)+e^{sA}x)\,ds,\quad  h  \nor \in E_{[0,T]}. 
\end{equation} 
Since $b$ is   Lipschitz continuous,  both $F_x$  and $G_x$
are continuous; therefore they are homeomorphisms  from $E_{[0,T]}$ onto $E_{[0,T]}$.  

A key role will  be played  in what  follows by the mapping
$\gamma_x(h)=h-G_x(h),\,h\in E_{[0,T]},$ (see e.g.,  identity \eqref{e1.10}),
\begin{equation}
\label{e1.8}
[\gamma_x(k)](t)=\int_0^t e^{(t-s)A}b(k(s)+e^{sA}x)\,ds,\quad k\in E_{[0,T]},\,t\in[0,T].
\end{equation} 
Now the solution $K_x(\cdot)$ of \eqref{e1.4} can be written as $K_x(\cdot)=F_x( W_A(\cdot)).$
So, the law of $Z_x$ on $E_{[0,T]}$ is given  by
$$
(\P\circ Z_x^{-1})(\Phi)=\E[\Phi(Z_x(\cdot))]=\E[\Phi(K_x(\cdot)+e^{\cdot A}x)]= \E[\Phi(F_x( W_A(\cdot))+e^{\cdot A}x)],\quad\forall\,\Phi\in B_b(E_{[0,T]}).
$$
Therefore,  by a change of  variables, it results
\begin{equation}
\label{e1.9}
(\P\circ Z_x^{-1})(\Phi)= 
\int_{E_{[0,T]}}\Phi(F_x( h(\cdot))+e^{\cdot A}x)\,\mathcal N_{\Q_T}(dh),\quad \forall\,\Phi\in  B_b(E_{[0,T]}).
\end{equation}
This  formula is usual when dealing with stochastic equations with additive noise, see e.g., \cite{FrVe12}.
But we look for a formula involving $G_x$ rather than $F_x$ because $G_x$ is explicit while $F_x$ is not.

So, we set
$F_x(h)=k,$
and obtain
\begin{equation}
\label{e1.10}
(\P\circ Z_x^{-1})(\Phi)= 
\int_{E_{[0,T]}}\Phi(k+e^{\cdot A}x)\,(\mathcal N_{\Q_T}\circ G_x)(dk),\quad \forall\,\Phi\in  B_b(E_{[0,T]}).
\end{equation}
But now we need to show that
\begin{equation}
\label{e1.10e}
N_{\Q_T}\circ G_x \ll N_{\mathcal \Q_T}.
\end{equation}
In fact, using   an identity due to  
R. Ramer, \cite{Ra74} (see Section 3.1 below) we show that
 \begin{equation}
 \label{ei1.11}
\varrho(x,h):=\frac{d(\mathcal N_{\Q_T}\circ G_x)}{d\mathcal N_{\Q_T}}\,(h)=\exp\left\{ -\tfrac12|\gamma_x(h)|^2_{\mathcal H_{\Q_T}}  + [I(\gamma_x)](h)\right\},\quad \mathcal N_{\Q_T}-a.s.
\end{equation}
where $\mathcal H_{\Q_T}$ denotes the Cameron--Martin space of $\mathcal N_{\Q_T}$ and $I(\gamma_x)$ the It\^o integral of $\gamma_x$, which results to be adapted.

So, we end up with, the identity
\begin{equation}
\label{e1.12}
(\P\circ Z_x^{-1})(\Phi) =\int_{E_{[0,T]}}\Phi(h+e^{\cdot A}x)\, \exp\left\{ -\tfrac12|\gamma_x(h)|^2_{\mathcal H_{\Q_T}}  + [I(\gamma_x)](h)\right\}\mathcal N_{\Q_T}(dh), \end{equation}
which  will be proved in Theorem \ref{t3.1} below.

As a consequence   of  \eqref{e1.12} we obtain  an expression for the transition semigroup  $P_T,\,T\ge 0,$ corresponding to the process $Z_x$, see Corollary \ref{c3.3},
\begin{equation}
\label{e1.14}
P_T\varphi(x)= \E[\varphi(Z_x(T))]=
\int_{E_{[0,T]}}\varphi(h(T)+e^{TA}x)\, \varrho(x,h)\;\mathcal N_{\Q_T}(dh).
\end{equation} 
\begin{Remark}
\em When $A=0$ identity \eqref{e1.12} reduces to  the  classical Girsanov formula, see e.g., \cite{Bo98}.
\end{Remark}
   
We end this section with  some comments about identities \eqref{e1.12} and  \eqref{e1.14}.
 First we note  that
the  classical Girsanov formula describes the   law of $Z_x$ in terms of the Wiener measure on $C([0,T];H)$,  and so,    does not provide any    information about   the   asymptotic behaviour
of $Z_x$. Instead the mild Girsanov formula allows \nor to guess the explicit form of the invariant measure  $\nu$ of $P_t,\,t\ge 0$ in case this measure  exists and of the invariant measure $\mu$ of the stochastic convolution (stationary O.U). 
\bigskip
 
For   instance when $P_t$ is strongly mixing or when a  suitable coupling argument  is available for equation \eqref{e1.1} one  can prove  that  the limit below exists
$$
\lim_{T\to \infty}\,P_T\varphi(x)=\int_H \varphi(y)\nu(dy),\quad \forall\, x\in H,\, \varphi\in C_b(H).
$$ 
Then it remains to pass to limit  as $T\to\infty$ on the right hand side of \eqref{e1.14}. This is not obvious in general, however, and it will be the object of future  research.
In this paper, we limit ourselves to  considering  the case where $b$ is  dissipative.
 In this case first, we describe the law  of   the stationary  process $Z_{\R}$ on the Fr\'echet space $C(\R; H)$ (the space where  $Z_{\R}$ lives; cf. \eqref{e5.24}), Theorem \ref{t4.3}. Then, taking advantage of  this result,  we provide explicit representation formulae both for $\nu, \mu$ and for the density $\tfrac{d\nu}{d\mu}$, Theorem \ref{t3.3z} and Proposition \ref{p4.6}.  

 \medskip

Let us explain briefly the content of the paper.

The application of the Ramer formula requires some preliminaries as: the Cameron--Martin space of $\mathcal N_{\Q_T}$, determined in 
Section 2  as well the Malliavin derivative and the Gaussian divergence operator.  The mild Girsanov formula, Theorem \ref{t3.1}, is proven in Section 3.

 Section 4 is devoted to the case when $b$ is dissipative.
Section 4.1 to some  preliminaries on  Gaussian measures on locally 
 convex spaces, see
the monograph  from  V. I. Bogachev, \cite{Bo98}. 

  Section 4.2 is devoted to an explicit formula for the  law of the stationary process  $Z_{\R}$   in the   space
 $C(\R;H)$. Section  4.3 to invariant measure $\nu$ and $\mu$ and to their relations.

In Section 5 we show that all result of the paper can be easily generalised to SPDEs with an additive colored noise. 

Finally, Appendix A is devoted to recall some maximal regularity results for abstract evolution equations,
which are needed in what follows.

 \section{The Cameron--Martin space }

 Since
 $$  
 \E\int_0^T|W_A(t)|^2_H\,dt=\frac12 \int_0^T \mbox{\rm Tr}\,[(-A)^{-1}\,(1-e^{2tA})] dt<\infty, 
  $$
    we can extend the measure $\mathcal N_{\Q_T}$ from $E_{[0,T]}$ to $ \X:=L^2(0,T;H)$  
 which we shall	 denote by  $\mathcal N_{\overline{\Q}_T}$. 
 It  is well known that $ \mathcal N_{\overline{\Q}_T}$   is Gaussian, with mean $0$ and covariance $\overline{\Q}_T$ given by
\begin{equation}
\label{e2.1}
(\overline{\Q}_T \,h)(t)=\int_{0}^{T} K(t,s)h(s)\, ds, \quad t \in [0,T],\;h\in \X,
\end{equation}
where  
\begin{equation}
\label{e2.2}
K(t,s)=
\int_{0}^{\min\{t,s\}}\,e^{(t+s-2r)A} dr \quad t,s\in[0,T],
\end{equation}
see e.g., \cite[Theorem 5.4]{DaZa14}.
The measure  $\mathcal N_{\overline{\Q}_T }$
 is concentrated on $E_{[0,T]}$ and its Cameron--Martin space  $\mathcal H_{\overline{\Q}_T}=\overline{\Q}_T ^{1/2}(\X)$ coincides with that of  
 $\mathcal N_{\Q_T }$, see e.g., \cite[Proposition 2.10]{DaZa14}.
   
The following lemma is  well  known;  we give a sketch  of its proof, however, for the reader convenience.  
\begin{Lemma}
\label{l2.1}
Setting $\A_T=-\overline{\Q}_T^{-1}$, \nor it results
\begin{equation}
 \label{e2.3}
\left\{\begin{array}{l}
  \A_T  f=f^{\prime\prime}(t)-A^{2}\,f(t),\quad \forall\,f\in D(\A_T) \\\\
D(\A_T)=\{f\in L^2(0,T;D(A^2))\cap W^{2,2}(0,T;H):\,\;f(0)=0,\;f^\prime(T)=Af(T)\}.
\end{array}\right.
 \end{equation}
\end{Lemma}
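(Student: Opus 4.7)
The plan is to compute $\overline{\Q}_T$ on a dense subspace of nice $h$'s, read off the differential operator and the boundary conditions, and then close the identification using uniqueness of a two-point boundary value problem together with the maximal regularity result of Appendix A.

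For the first step, I fix $h$ smooth (say $h\in C^1([0,T];D(A^2))$) and set $f=\overline{\Q}_T h$. Splitting the kernel at $s=t$,
\[
f(t)=\int_0^t\!\!\int_0^s e^{(t+s-2r)A}\,dr\,h(s)\,ds+\int_t^T\!\!\int_0^t e^{(t+s-2r)A}\,dr\,h(s)\,ds.
\]
Since $K(0,s)\equiv 0$, one immediately reads off $f(0)=0$. Differentiating once via the Leibniz rule, the boundary contributions from $s=t$ cancel between the two pieces, and I pick up exactly the inner endpoint $r=t$ in the second summand. The result is
\[
f'(t)=Af(t)+\int_t^T e^{(s-t)A}h(s)\,ds,
\]
which at $t=T$ already gives the boundary condition $f'(T)=Af(T)$. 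Differentiating the auxiliary integral $J(t):=\int_t^T e^{(s-t)A}h(s)\,ds$ yields $J'(t)=-h(t)-AJ(t)$, and substituting gives
\[
f''(t)=Af'(t)+J'(t)=Af'(t)-h(t)-A\bigl(f'(t)-Af(t)\bigr)=A^2 f(t)-h(t).
\]
Hence $\A_T f=f''-A^2 f=-h$, so $\overline{\Q}_T$ inverts $-\A_T$ on this dense class; the two endpoint conditions $f(0)=0$ and $f'(T)=Af(T)$ are precisely those stated in \eqref{e2.3}.

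For the second step, I need to show conversely that every $f$ in the claimed domain arises as $\overline{\Q}_T h$ for some $h\in \X$. Given $f\in L^2(0,T;D(A^2))\cap W^{2,2}(0,T;H)$ satisfying $f(0)=0$, $f'(T)=Af(T)$, set $h:=-(f''-A^2 f)\in \X$ and let $\tilde f:=\overline{\Q}_T h$. By the first step $\tilde f$ solves the same inhomogeneous equation $\tilde f''-A^2\tilde f=-h$ with the same boundary data, so $g:=f-\tilde f$ solves $g''=A^2 g$, $g(0)=0$, $g'(T)=Ag(T)$. Testing the equation against $g$ in $L^2(0,T;H)$ and integrating by parts, the boundary terms kill each other thanks to the conditions at $0$ and $T$, leaving $\|g'\|_{L^2(0,T;H)}^2+\|Ag\|_{L^2(0,T;H)}^2=0$, whence $g\equiv 0$. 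This shows $f=\overline{\Q}_T h$, i.e.\ $f\in\mathrm{Range}(\overline{\Q}_T)$ and $\A_T f=f''-A^2 f$.

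Finally, to see that the domain is \emph{exactly} $L^2(0,T;D(A^2))\cap W^{2,2}(0,T;H)$ with those boundary conditions and not something smaller, I invoke the maximal regularity result for the abstract wave--type operator $\partial_t^2-A^2$ recalled in Appendix A: for every $h\in\X$ the two-point boundary value problem
\[
f''-A^2 f=-h,\qquad f(0)=0,\quad f'(T)=Af(T),
\]
admits a unique solution $f$ in that space, and by the first step that solution is $\overline{\Q}_T h$. This gives equality of domains and concludes the proof.

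The main obstacle is the careful Leibniz differentiation at the diagonal $s=t$: one must check that the two boundary contributions cancel and that the residual term $e^{(s-t)A}h(s)$ has the right sign, and then verify (using Hypothesis \ref{h1}(i) and, if needed, a density argument) that the identities extend from smooth $h$ to all of $\X$. The remaining domain identification is then a routine appeal to the maximal regularity statement.
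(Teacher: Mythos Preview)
Your Step 1 is exactly the paper's proof: split the kernel at $s=t$, differentiate to obtain $f'(t)=Af(t)+\int_t^T e^{(s-t)A}h(s)\,ds$, read off the two boundary conditions, differentiate again to get $f''-A^2f=-h$. The paper stops there (it is labelled a sketch), so your Steps 2--3 supply the converse inclusion that the paper omits.

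Two small points. In Step 2 the boundary terms do not ``kill each other'': the term at $t=T$ survives as $\langle g'(T),g(T)\rangle=\langle Ag(T),g(T)\rangle=-|(-A)^{1/2}g(T)|_H^2\le 0$, so what you actually obtain is
\[
|(-A)^{1/2}g(T)|_H^2+\|g'\|_{L^2(0,T;H)}^2+\|Ag\|_{L^2(0,T;H)}^2=0,
\]
which still forces $g\equiv 0$. In Step 3, Appendix A contains only the \emph{first-order} maximal regularity result (Proposition \ref{pA1}) for $u'=Au+f$, not a second-order one for $\partial_t^2-A^2$. You can still get what you need from it by factoring: set $J(t)=\int_t^T e^{(s-t)A}h(s)\,ds$, note that $J$ solves $J'+AJ=-h$, $J(T)=0$, so Proposition \ref{pA1} (time-reversed) gives $J\in L^2(0,T;D(A))\cap W^{1,2}(0,T;H)$; then $f'-Af=J$, $f(0)=0$, and a second application of Proposition \ref{pA1} in the space $D(A)$ yields $f\in L^2(0,T;D(A^2))\cap W^{2,2}(0,T;H)$. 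With these fixes your argument is complete and more detailed than the paper's.
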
 
\begin{proof}  
 Given $h\in \X$ set $\overline{\Q}_T\, h=f$ and write 
 $$
\int_{0}^tK(t,s)\,h(s)\,ds+\int_{t}^TK(t,s)\,h(s)ds =f(t),\quad t\in[0,T].
 $$
 So,
\begin{equation}
\label{e2.4}
\int_{0}^t\left( \int_{0}^se^{(t+s-2r)A}\,dr \right)\,h(s)\,ds+\int_{t}^T\left(  \int_{0}^te^{(t+s-2r)A}\, dr \right)\,h(s)ds =f(t). 
\end{equation}
Note that $f(0)=0.$   Introducing $Q_t = \int_0^t e^{2sA} ds = (-2 A)^{-1}(I-e^{2tA}),$ $ t \ge 0,$  and
  differentiating \eqref{e2.4} with respect to 
$t$ yields
\begin{displaymath}
\begin{array}{l}
\ds Q_t\,h(t)+ A \int_{0}^t\left( \int_{0}^se^{(t+s-2r)A}\,dr \right)\,h(s)\,ds\\
\\
\ds-Q_t\,h(t)+\int_{t}^T e^{(s-
t)A}\,h(s)ds+A\int_t^T\left( \int_{0}^te^{(t+s-2r)A}\,dr\right)\,h(s)\,ds  =f^\prime(t).
\end{array}
\end{displaymath}
It follows that
\begin{equation}
\label{e2.5}
Af(t)+\int_{t}^T e^{(s-
t)A}\,\,h(s)ds =f^\prime(t),
\end{equation}
which implies
$f^\prime(T)= Af(T).$
 Now, differentiating   \eqref{e2.5} with respect to $t$, a.e.,  yields
 \begin{equation}
\label{e2.6}
Af^\prime(t)-h(t) -A\int_{t}^T   e^{(s-
t)A} \,h(s)ds =f^{\prime\prime}(t).
\end{equation}
Taking into account \eqref{e2.5}, we deduce
 \begin{equation}
\label{e2.7}
f^{\prime\prime}(t) -A^2f(t)=-h(t).
\end{equation}
Since $h=-\A_T f$, we have
$$
\A_T f=f^{\prime\prime}(t)-A^2f(t),
$$
as required.
 \end{proof}
 
 Now we can determine  the {\em Cameron--Martin space}  of $\mathcal N_{\overline{\Q}_T}$ which, as remarked earlier, coincides   with that of  $\mathcal N_{\Q_T}$.
\begin{Proposition}\label{p2.2} 
 It results, see Appendix A, \nor  
 $$
 \mathcal  H_{ \Q_T \nor }  =\mathcal H_{\overline{\Q}_T}=\{u\in L^2(0,T;D(A))\cap  W^{1,2}(0,T;H):\, u(0)=0\}:=\Gamma_A(H).
 $$
 Moreover, if $u\in \mathcal H_{\overline{\Q}_T}$, we have
\begin{equation} 
\label{e2.8}
 |u|^2_{\mathcal H_{\overline{\Q}_T}}:= |\overline{\Q}_T^{\,-1/2}u|^2_{\X } =|(-A)^{1/2}\,u(T)|_H^2 +\int_0^T (| u^\prime(t)|_H^2+| Au(t)|_H^2)\,dt.
\end{equation}
\end{Proposition}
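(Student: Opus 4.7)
The plan is to factor $\overline{\Q}_T$ as $(B^*B)^{-1}$ where $B\colon\Gamma_A(H)\to\X$ is the maximal regularity operator $Bu=u'-Au$; the identification $\mathcal H_{\overline{\Q}_T}=\Gamma_A(H)$ will then follow from the polar decomposition of $B$, and the norm formula from a direct integration by parts.

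Set $Bu=u'-Au$ with $D(B)=\Gamma_A(H)$. The maximal $L^2$--regularity recalled in Appendix A gives that $B$ is a bijection from $\Gamma_A(H)$ onto $\X$, with bounded inverse $(B^{-1}g)(t)=\int_0^t e^{(t-s)A}g(s)\,ds$. Integration by parts, together with $u(0)=0$ for $u\in D(B)$, identifies the adjoint as $B^*v=-v'-Av$ on $D(B^*)=\{v\in L^2(0,T;D(A))\cap W^{1,2}(0,T;H):\,v(T)=0\}$; solving $-v'-Av=g$ with $v(T)=0$ yields $(B^*)^{-1}g(t)=\int_t^T e^{(s-t)A}g(s)\,ds$.

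Composing and applying Fubini's theorem,
\[
(B^{-1}(B^*)^{-1}g)(t)=\int_0^t\int_r^T e^{(t+s-2r)A}g(s)\,ds\,dr=\int_0^T K(t,s)g(s)\,ds=(\overline{\Q}_T g)(t),
\]
with $K(t,s)$ as in \eqref{e2.2}. Hence $(B^*B)^{-1}=\overline{\Q}_T$, so that $B^*B$ coincides with the self--adjoint operator $-\A_T$ of Lemma \ref{l2.1}. The polar decomposition $B=U|B|$ of the closed operator $B$ produces a self--adjoint $|B|=(B^*B)^{1/2}$ with $D(|B|)=D(B)=\Gamma_A(H)$; since $\overline{\Q}_T^{1/2}=|B|^{-1}$, we conclude
\[
\mathcal H_{\overline{\Q}_T}=\overline{\Q}_T^{1/2}(\X)=|B|^{-1}(\X)=D(|B|)=\Gamma_A(H).
\]

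For the norm, if $u=\overline{\Q}_T^{1/2}g$ then $|u|_{\mathcal H_{\overline{\Q}_T}}^2=|g|_\X^2=\langle B^*Bu,u\rangle_\X=|Bu|_\X^2$. Expanding $|u'(t)-Au(t)|_H^2$ and evaluating the cross term via
\[
-2\int_0^T\langle u'(t),Au(t)\rangle_H\,dt=-\langle Au(T),u(T)\rangle_H=|(-A)^{1/2}u(T)|_H^2,
\]
which uses $u(0)=0$, the self--adjointness of $A$, and the trace regularity $u(T)\in D((-A)^{1/2})$ coming from the embedding $L^2(0,T;D(A))\cap W^{1,2}(0,T;H)\hookrightarrow C([0,T];D((-A)^{1/2}))$, yields \eqref{e2.8}. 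The main technical subtlety is precisely this last integration by parts; it is first justified on the smooth core $D(\A_T)$, where the enhanced regularity $u\in L^2(0,T;D(A^2))\cap W^{2,2}(0,T;H)$ legitimizes the manipulations, and then extended to all of $\Gamma_A(H)$ by closedness and density.
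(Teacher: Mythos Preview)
Your proof is correct and more structurally complete than the paper's, but it proceeds along a genuinely different line.

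The paper does not introduce the first--order operator $B$ at all. It works directly with the second--order operator $\A_T=-\overline{\Q}_T^{-1}$ of Lemma~\ref{l2.1}: taking $f\in D(\A_T)$, it computes $\int_0^T\langle \A_T f,f\rangle_H\,dt=\int_0^T\langle f''-A^2f,f\rangle_H\,dt$ by a single integration by parts, then uses the boundary condition $f'(T)=Af(T)$ built into $D(\A_T)$ to convert the boundary term $\langle f'(T),f(T)\rangle_H$ into $-|(-A)^{1/2}f(T)|_H^2$. This yields \eqref{e2.8} on the core, and the identification $\mathcal H_{\overline{\Q}_T}=\Gamma_A(H)$ is left implicit, pointing to the maximal regularity material in Appendix~A.

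Your route instead factors $\overline{\Q}_T=B^{-1}(B^*)^{-1}$ through the mild--solution operator, obtains the Cameron--Martin space cleanly via polar decomposition as $D(|B|)=D(B)=\Gamma_A(H)$, and recovers the norm by expanding $|u'-Au|_H^2$. The advantages of your approach are that the space identification is explicit rather than deferred, and the boundary condition $f'(T)=Af(T)$ never needs to be invoked (it is hidden in the computation of $B^*$). The paper's approach has the virtue of being a one--line computation once Lemma~\ref{l2.1} is in hand. Both reduce the delicate step (justifying the integration by parts) to the same dense core $D(\A_T)$.
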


\begin{Remark}
\label{r2.3}
\em
Note that by interpolation it results, see e.g., \cite{Lu18},
 \begin{equation}
\label{e2.9}
\{u\in L^2(0,T;D(A))\cap  W^{1,2}(0,T;H):\, u(0)=0\} \subset C([0,T];D(-A)^{1/2})),
\end{equation}
so that  the term $|(-A)^{1/2}\,u(T)|_ H$ in \eqref{e2.8} is meaningful.
\end{Remark}
{\it Proof of Proposition \ref{p2.2}}.
 It is enough to assume $f\in D(\A_T)$, since
$D(\A_T)$ is dense in $D((-\A_T)^{1/2})$. In this case  we can write
$$
\begin{array}{l}
\ds\int_0^T\langle\A_T f(t),f(t)\rangle _Hdt=\int_0^T \langle \,f^{\prime\prime}(t),f(t)\rangle_H\, dt-
\int_0^T \langle   A^{2} f(t),f(t)\rangle_H\, dt\\
\\
\ds= \langle   f^\prime(T),f(T)\rangle _H-\int_0^T |\,f^\prime(t)|_H^2\,dt
-\int_0^T|Af(t)|_H^2\,dt\\
\\
\ds=-|(-A)^{1/2}\,f(T)|_H^2 -\int_0^T | f^\prime(t)|_H^2\,dt
-\int_0^T|A\,f(t)|_H^2\,dt.
\end{array}
$$
Then \eqref{e2.8} follows.
\hfill $\Box$

   We  now prove a result which  provides a useful information
 on the support of $\mathcal N_{\overline{\Q}_T}$.
 \begin{Lemma}
\label{l2.4}
Assume Hypothesis \ref{h1}.
  Then it results 
\begin{equation}
\label{e2.10}
\int_{\X}|(-A)^{\beta/2} h|^2_{\X}\,\mathcal N_{\overline{\Q}_T}(dh)\le \tfrac{T}2\,\mbox{\rm Tr}\,[(-A)^{\beta-1}]<\infty.
\end{equation}
\end{Lemma}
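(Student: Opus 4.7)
The plan is to recognize that $\mathcal N_{\overline{\Q}_T}$ is the law of the stochastic convolution $W_A$ when viewed as a random element of $\X=L^2(0,T;H)$, so that the left-hand side of \eqref{e2.10} equals
\begin{equation*}
\int_\X |(-A)^{\beta/2} h|^2_\X \, \mathcal N_{\overline{\Q}_T}(dh) = \E \int_0^T |(-A)^{\beta/2} W_A(t)|_H^2 \, dt.
\end{equation*}
By Fubini (both sides are nonnegative), it suffices to estimate $\E |(-A)^{\beta/2} W_A(t)|_H^2$ for fixed $t$ and then integrate in $t$.

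The main tool is the It\^o isometry applied to the $H$-valued stochastic integral
\begin{equation*}
(-A)^{\beta/2} W_A(t) = \int_0^t (-A)^{\beta/2} e^{(t-s)A} \, dW(s),
\end{equation*}
which yields $\E |(-A)^{\beta/2} W_A(t)|_H^2 = \int_0^t \|(-A)^{\beta/2} e^{(t-s)A}\|_{HS}^2 \, ds = \int_0^t \mbox{\rm Tr}\bigl[(-A)^{\beta} e^{2(t-s)A}\bigr]\,ds$, where the latter identity uses that $A$ is self-adjoint. Diagonalising $A$ in an orthonormal basis $\{e_k\}$ of eigenvectors with eigenvalues $-\lambda_k<0$, the trace decomposes as $\sum_k \lambda_k^\beta e^{-2(t-s)\lambda_k}$, and the inner $s$-integration gives $\sum_k \lambda_k^{\beta-1}(1-e^{-2t\lambda_k})/2$.

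Integrating in $t\in[0,T]$, using $1-e^{-2t\lambda_k}\le 1$ term by term, produces the bound
\begin{equation*}
\E\int_0^T |(-A)^{\beta/2} W_A(t)|_H^2 \, dt \le \tfrac{T}{2} \sum_k \lambda_k^{\beta-1} = \tfrac{T}{2}\,\mbox{\rm Tr}\bigl[(-A)^{\beta-1}\bigr],
\end{equation*}
which is finite by Hypothesis \ref{h1}(i). I do not expect any real obstacle here: the only subtle point is the justification that the Hilbert--Schmidt norm $\|(-A)^{\beta/2} e^{(t-s)A}\|_{HS}^2$ is indeed finite for $s<t$ (so that the It\^o isometry applies), which follows from the same spectral computation since $\sum_k \lambda_k^\beta e^{-2(t-s)\lambda_k}<\infty$ for $s<t$ by the exponential decay, independently of $\beta$. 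The switch between the space $E_{[0,T]}$ (on which $W_A$ lives) and $\X$ (where $\mathcal N_{\overline{\Q}_T}$ is defined by extension) is harmless because both expressions coincide with the same $\E\int_0^T|\cdot|_H^2$.
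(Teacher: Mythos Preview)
Your proposal is correct and follows essentially the same route as the paper: both identify the integral with $\E\int_0^T|(-A)^{\beta/2}W_A(t)|_H^2\,dt$, apply the It\^o isometry to obtain $\int_0^t\mbox{\rm Tr}[(-A)^\beta e^{2(t-s)A}]\,ds$, evaluate the $s$-integral to $\tfrac12\,\mbox{\rm Tr}[(-A)^{\beta-1}(I-e^{2tA})]$, and bound by $\tfrac{T}{2}\,\mbox{\rm Tr}[(-A)^{\beta-1}]$. Your version is merely more explicit in spelling out the spectral decomposition and the finiteness of the Hilbert--Schmidt norm.
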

\begin{proof}
Write
$$
\begin{array}{l}
\ds\int_{\X}|(-A)^{\beta/2} h|_{ \X  }^2\,\mathcal N_{\overline{\Q}_T}(dh)
=\E\int_0^T|(-A)^{\beta/2}\,W_A(t)|_H^2\,dt\\
\\
\ds=\E\int_0^Tdt\left|\int_0^t(-A)^{\beta /2}e^{(t-s)A}dW(s)\right|_H^2\\
\\
\ds=\int_0^Tdt\int_0^t\,\mbox{\rm Tr}\,[(-A)^{\beta }\,e^{2(t-s)A}]ds\\
\\
\ds=\tfrac12\int_0^T\mbox{\rm Tr}\,[(-A)^{\beta-1}\,(I-e^{2tA})]\,dt\le \tfrac{T}2\,\mbox{\rm Tr}\,[(-A)^{\beta  -1}].
\end{array}
$$ 
The conclusion follows.
\end{proof}  
 \begin{Remark}
 \label{r2.5}
 \em By Lemma  \ref{l2.4} it follows that $\mathcal N_{\overline{\Q}_T}$
 is concentrated on $L^2(0,T;D((-A)^{\beta/2}))$,
 where $\beta$  is given in Hypothesis \ref{h1}. \nor 
  \end{Remark} 
  
  \subsection{Brownian Motion}
   We are going to define
   the Brownian motion on  $\X$ and $E_{[0,T]}$.
   To this purpose we first introduce the {\em white noise} function.  Let    $(\psi_j)$ be an orthonormal basis in $\X$ of eigenfunctions of $\overline{\Q}_{T}$ and 
let  $(\lambda_j)$ be  the sequence of the
corresponding eigenvalues,
$$
\overline{\Q}_{T}\psi_j=\lambda_j\psi_j,\quad j\in\N.
$$
For all $f\in \X$ we set
 $$
W_f(h)=\langle \overline{\Q}_{T}^{-1/2}h,f\rangle_{\X}=\sum_{j=1}^\infty\lambda_j^{-1/2}\langle  h,\psi_j \rangle_{\X}
\langle  f,\psi_j \rangle_{\X},\quad h\in  \X,\; \mathcal N_{\overline{\Q}_T}\mbox{-a.e.}  
$$
Finally, we    define a sequence of   Brownian motions in $\X$ by proceeding as in \cite{Da13}
 \begin{equation}
\label{e211}
B^\alpha(t)=[W_{{\mathds 1}_{[0,t]}e^\alpha}],\quad t\in[0,T],\,\alpha\in\N,\quad B(t)=\sum_\alpha B^\alpha(t)\,e_\alpha
\end{equation}
where $(e_\alpha)$ is an orthonormal basis on $H$.
As proved in \cite{Da13}, $B^\alpha$ is continuous
for all $\alpha\in\N$.
 
\vskip 1mm 
 Recall that $\gamma_x \in L^2(E_{[0,T]} \times [0,T];H ) \sim L^2 (E_{[0,T]}; \X)$\footnote{We put the product measure
 $ \mathcal N_{\overline{\Q}_T} \times dt$ on $E_{[0,T]} \times [0,T]$}\nor 
 
 The following result is standard. Let $\mathcal M$  be the Malliavin derivative.    We only point out that ${\mathcal M}^*(\gamma_x)= -\delta (\gamma_x)$ according to  the notation of \cite{Bo98} and \cite{Nu06}. \nor
\begin{Proposition}
\label{p10}
Let $x\in H$ then $\gamma_x\in D(\mathcal  M^*)$ and  it results  for any $h \in E_{[0,T]}$, $\mathcal N_{\overline{\Q}_T}$-a.e.,
 \begin{equation}
\label{e19}
[\mathcal  M^*(\gamma_x)](h)=\int_0^T\langle[\gamma_x(h)](s),d[B(s)](h)\rangle_H=\int_0^T \int_0^s\langle  e^{(s-r)A} \nor  b(h(r)+e^{rA}x)  \,dr,d[B(s)](h)\rangle_H.
\end{equation}
Moreover 
 \begin{equation}
\label{e18aa} \blu  
\int_\X|[\mathcal  M^*\gamma_x](h)|^2\,\mathcal N_{\overline{ \Q}_T }(dh)=\int_\X\int_0^T|[\gamma_x(h)](s)|_H^2\,ds\,\mathcal N_{\overline{ \Q}_T}(dh). \nor
\end{equation}
\end{Proposition}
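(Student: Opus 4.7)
The plan is to reduce this to the well known fact that, in the Malliavin calculus associated with a Brownian motion, the Skorohod/divergence operator $\delta$ extended to $L^2$--processes coincides with the It\^o integral on the subclass of adapted processes, and that on this subclass it is an isometry. Concretely, I would treat $\gamma_x$ as a process with values in $H$ indexed by $s\in[0,T]$ and by $h\in E_{[0,T]}$ (or $\X$), and check the two hypotheses that trigger this identification: adaptedness and square integrability.

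First I would verify that $\gamma_x$ is an adapted process. From the definition
$[\gamma_x(h)](s)=\int_0^s e^{(s-r)A}b(h(r)+e^{rA}x)\,dr$
the value at time $s$ only involves $h(r)$ for $r\in[0,s]$; hence with respect to the natural filtration of the Brownian motion $(B(t))_{t\in[0,T]}$ built in the previous subsection, the mapping $h\mapsto [\gamma_x(h)](s)$ is $\mathcal F_s$--measurable. Therefore $\gamma_x$ belongs to the class of $H$--valued adapted processes.

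Next I would check the integrability: $\gamma_x\in L^2(E_{[0,T]};\X)$ with respect to $\mathcal N_{\overline{\Q}_T}$. Using the Lipschitz continuity of $b$ and $\|e^{tA}\|_{\mathcal L(H)}\le 1$,
\[
|[\gamma_x(h)](s)|_H \le \int_0^s \bigl(|b(0)|_H+L\,|h(r)|_H+L\,|e^{rA}x|_H\bigr)\,dr,
\]
so $\int_0^T|[\gamma_x(h)](s)|_H^2\,ds$ is dominated by a polynomial (of degree two) in $|h|_\X$; by Lemma \ref{l2.4} and the standard Fernique/Gaussian moment bounds, this is $\mathcal N_{\overline{\Q}_T}$--integrable.

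Finally, adaptedness plus $L^2$--integrability places $\gamma_x$ in $D(\mathcal M^*)=D(\delta)$, and the general identification of the divergence with the It\^o integral on adapted square integrable processes (see \cite{Bo98} and \cite[Sec.~1.3]{Nu06}) yields
\[
[\mathcal M^*(\gamma_x)](h)=\int_0^T\langle[\gamma_x(h)](s),d[B(s)](h)\rangle_H,
\]
which upon substituting the explicit form of $\gamma_x$ is precisely \eqref{e19}. The isometry \eqref{e18aa} is then just the It\^o isometry for the $H$--valued stochastic integral of an adapted square integrable integrand. The only non--routine point is the bookkeeping needed to match the abstract Malliavin framework of \cite{Bo98} (formulated on the Gaussian space $(\X,\mathcal N_{\overline{\Q}_T})$ with Cameron--Martin space $\mathcal H_{\Q_T}$) with the cylindrical It\^o integration theory used in Section 1; this is exactly what the construction of $B^\alpha$ in \eqref{e211} was designed to provide, so no further work is needed beyond invoking those references.
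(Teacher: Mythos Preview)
Your proposal is correct and follows essentially the same approach as the paper: the paper's proof is a single sentence noting that $\gamma_x$ is adapted to the natural filtration and then invoking the standard properties of the It\^o integral, and you have simply fleshed out those two ingredients (adaptedness, square integrability, and the resulting identification of $\mathcal M^*$ with the It\^o integral plus the It\^o isometry) in more detail.
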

\begin{proof}
First note that $\gamma_x$ is adapted to the natural filtration, so that the conclusion follows from  the standard properties of the It\^o integral.

\end{proof}
In the following we shall write sometimes for brevity
 $$
 \mathcal  M^*(\gamma_x)=  \int_0^T   \langle [\gamma_x(\cdot)](s),dB(s)\rangle_H. \nor  
$$

  \section{Proof of the mild Girsanov formula}
   \subsection{The Ramer identity}
   
 Here we recall  the Ramer identity  (or Kusuoka-Ramer theorem, \cite{Ku82}). 
   Such theorem has been also applied to study stochastic boundary value problems (see \cite{NuaPar}, \cite{CapPri} and the references therein). 
  
 The following  version is an extension to complete  separable locally convex spaces (in particular to  separable Fr\'echet spaces) of a result  given by  D. Nualart \cite[pag. 240]{Nu06} (see also \cite[Lemma 4.1.2]{Nu06} and Section 6.6 in Bogachev \cite{Bo98} for an analogous result in general locally convex spaces).  In this section we apply  this identity in a Banach space of continuous functions, but we will need such extension in Section 4.
  
 \blu We write $\phi \in D(\mathcal M)$ to indicate that $\phi$ is differentiable along the directions of the Cameron-Martin space  or Malliavin differentiable (cf. \cite{Bo98} and \cite{Nu06}). Moreover $\mathcal M^* = -\delta$ \nor
 
  \begin{Proposition}
\label{p3.1}
Let $\Y$ be a  complete   separable locally convex space and $\Lambda$ a homeomorphism of $\Y$ onto $\Y$. Let $ \mathcal N_\U$ be a   Gaussian measure  on $\Y$ of mean $0$, covariance $\U$ and Cameron--Martin space $\mathcal H_\U$. Set $\phi(h)=h-\Lambda^{-1}h$, for all  $h\in \Y$, and assume that
 $\phi\in D(\mathcal M)$ and $\phi(h)$ belongs to $\mathcal  H_{\mathcal \U}$, for  $h \in \Y$, $ \mathcal N_\U$-a.e.,  where $\mathcal M$ is  the   the Malliavin derivative  
in $\Y$. Suppose that ${\det}_2(I-\mathcal M\phi(h)) \not =0$, $ \mathcal N_\U$-a.e.. 
 
  \medskip
Then it results $\mathcal N_{\U}\circ \Lambda^{-1}\ll \mathcal N_\U$ and
 \begin{equation}
\label{e3.1}
\frac{d\mathcal N_{\U}\circ \Lambda^{-1}}{d\mathcal N_\U}(h) =\exp\left\{-\tfrac12|\phi(h)|^2_{\mathcal H_{\U }} + \mathcal M^*\phi(h)\right\}\,{\det}_2(I-\mathcal M\phi(h)),
\end{equation} 
where 
 the determinant \nor  is intended  in the sense of  Carleman--Fredholm, see $\cite{GoKr69}$.
\end{Proposition}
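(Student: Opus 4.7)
The strategy is to adapt Nualart's proof of the Ramer theorem (\cite[Thm.~4.1.2]{Nu06}) to the Fr\'echet setting, along the lines of \cite[Sec.~6.6]{Bo98}. The key structural observation is that every object on the right-hand side of \eqref{e3.1}---the Cameron--Martin norm $|\phi(h)|_{\mathcal H_\U}$, the Malliavin derivative $\mathcal M\phi$, its divergence $\mathcal M^*\phi$, and the Carleman--Fredholm determinant $\det_2(I-\mathcal M\phi)$---depends only on the abstract Wiener structure $(\mathcal H_\U,\mathcal N_\U)$ and is insensitive to whether the carrier space $\Y$ is Banach or merely Fr\'echet. Hence the extension to locally convex $\Y$ is essentially a measurability and approximation exercise on top of the classical proof.

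First I would establish the identity for $\phi$ of the form $\phi(h)=\sum_{i=1}^n f_i(h)\,e_i$, with $(e_i)$ orthonormal in $\mathcal H_\U$ and $f_i$ smooth cylindrical functionals. In this case $\Lambda^{-1}$ acts by a finite-dimensional Cameron--Martin shift; conditioning on the $\mathcal N_\U$-completion of the orthogonal complement of $\mathrm{span}(e_1,\dots,e_n)$ in $\mathcal H_\U$, the identity collapses to the standard change-of-variables formula for the $n$-dimensional Gaussian measure, and a direct Jacobian computation reproduces exactly the factor $\det_2(I-\mathcal M\phi)\exp\{-\tfrac12|\phi|^2_{\mathcal H_\U}+\mathcal M^*\phi\}$. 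Next, I would approximate a general $\phi\in D(\mathcal M)\cap \mathcal H_\U$ by cylindrical $\phi_n$ convergent in the graph norm of $\mathcal M$, and pass to the limit using $|\phi_n|^2_{\mathcal H_\U}\to |\phi|^2_{\mathcal H_\U}$ in $L^1(\mathcal N_\U)$, $\mathcal M^*\phi_n\to \mathcal M^*\phi$ in $L^2(\mathcal N_\U)$ (closedness of the divergence), and $\det_2(I-\mathcal M\phi_n)\to \det_2(I-\mathcal M\phi)$ in probability, the latter by Hilbert--Schmidt continuity of the Carleman--Fredholm determinant (\cite{GoKr69}).

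The main obstacle will be the uniform integrability of the candidate densities $\exp\{-\tfrac12|\phi_n|^2_{\mathcal H_\U}+\mathcal M^*\phi_n\}\,\det_2(I-\mathcal M\phi_n)$ required to pass to the limit in the Radon--Nikodym identity. In the classical proof this is controlled by a local Ramer-type condition ensuring uniform invertibility of $I-\mathcal M\phi_n$; here the homeomorphism hypothesis on $\Lambda$ substitutes for a direct Hilbert--Schmidt bound by guaranteeing that $I-\mathcal M\phi$ is a.s.\ boundedly invertible, which together with standard exponential estimates on the divergence yields the needed integrability. Finally, the Radon property of $\mathcal N_\U$ on the separable Fr\'echet space $\Y$ guarantees concentration on a separable Banach subspace $\Y_0$ carrying an honest abstract Wiener structure with the same Cameron--Martin space $\mathcal H_\U$; the $\mathcal N_\U$-a.s.\ identity obtained there transfers to an $\mathcal N_\U$-a.s.\ identity on $\Y$, and Borel measurability of $\Phi\circ\Lambda^{-1}$ on $\Y$ follows from continuity of $\Lambda^{-1}$.
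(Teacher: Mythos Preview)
The paper does not prove Proposition~\ref{p3.1}. It is stated as a known result---the Ramer (or Kusuoka--Ramer) identity---and the authors simply cite \cite{Ra74}, \cite{Ku82}, \cite[p.~240]{Nu06}, and \cite[Sec.~6.6]{Bo98}, remarking that the version quoted is an extension to separable Fr\'echet spaces of the classical Banach-space statement. There is therefore no proof in the paper to compare your proposal against.

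Your sketch is a reasonable outline of the standard argument found in those references: cylindrical reduction to a finite-dimensional Gaussian change of variables, followed by approximation in the graph norm of $\mathcal M$ and passage to the limit via Hilbert--Schmidt continuity of $\det_2$. Two cautions, though. First, the step ``the homeomorphism hypothesis on $\Lambda$ \dots\ guarantees that $I-\mathcal M\phi$ is a.s.\ boundedly invertible'' is not automatic: in the full Ramer--Kusuoka theory the a.s.\ invertibility of $I-\mathcal M\phi(h)$ on $\mathcal H_\U$ is either assumed outright or derived from a stronger local bijectivity condition on $\Lambda$ restricted to Cameron--Martin translates, not merely from $\Lambda$ being a homeomorphism of $\Y$. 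Second, the uniform integrability of the approximating densities is the genuinely delicate point in these proofs and typically requires a quantitative bound of the type $\|\mathcal M\phi\|_{\mathrm{HS}}\le c<1$ on a localizing sequence, or an exponential-martingale argument; the homeomorphism assumption alone does not supply this. So your roadmap is correct in spirit, but a complete proof would need to spell out the invertibility and uniform-integrability inputs more carefully, exactly as in \cite{Ku82} or \cite[Sec.~6.6]{Bo98}.
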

Now we are ready to show the main result of this section. 
  \begin{Theorem}
  \label{t3.1}
  Assume  Hypothesis \ref{h1}, let $x\in H,\, h\in E_{[0,T]}$
and set 
  \begin{equation}
  \label{e3.2}
 [\gamma_x(h)](t)=\int_0^te^{(t-s)A}b(h(s)+e^{sA}x)\,ds,\quad  \,t\in[0,T].
\end{equation}
  Then, \medskip
   
 \noindent (i)    $\gamma_x(h)$ belongs to  $\mathcal H_{\Q_T}$ for     $h\in E_{[0,T]}, \,\mathcal N_{\Q_T}$-a.e.. \medskip
   
 \noindent (ii) The vector field  $\gamma_x$ belongs to  $ D(\mathcal M)\cap D(\mathcal M^*)$ and is It\^o integrable (we denote by $I(\gamma_x)$ the It\^o's integral).\medskip
        
 \noindent     (iii)   $\mbox{\rm det}_2\,(I-\mathcal M(\gamma_x)(h))=1$ for   $h\in \X$,  $\mathcal N_{\overline{\Q}_T}$-a.e.\nor \footnote{$\det_2$ represents the determinant of Carleman-Fredholm.}
\medskip
 
 \noindent (iv) The law of $Z_x$ on  $E_{[0,T]}$ is given,  for  all $\Phi\in  B_b(E_{[0,T]})$, by
 $$
(\P\circ Z_x^{-1})(\Phi) =\int_{ E_{[0,T]}\nor }\Phi(h+e^{\cdot A}x)\, \exp\left\{ -\tfrac12|\gamma_x(h)|^2_{\mathcal H_{\Q_T}}  + I(\gamma_x)(h)\right\}\mathcal N_{\Q_T}(dh). $$
 \end{Theorem}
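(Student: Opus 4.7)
\emph{Step 1 (Part (i)).} I would fix $h \in E_{[0,T]}$ and note that, by the Lipschitz property of $b$, the map $s \mapsto b(h(s)+e^{sA}x)$ lies in $L^2(0,T;H)$ (in fact it is bounded on $[0,T]$). By the maximal regularity result recalled in Appendix A, the convolution $u \mapsto \int_0^\cdot e^{(\cdot-s)A} u(s)\,ds$ sends $L^2(0,T;H)$ into $L^2(0,T;D(A)) \cap W^{1,2}(0,T;H)$, and the image vanishes at $0$. By Proposition \ref{p2.2} this exactly places $\gamma_x(h)$ in $\mathcal H_{\Q_T}=\Gamma_A(H)$. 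Using Lemma \ref{l2.4} and Remark \ref{r2.5}, this actually holds on a set of full $\mathcal N_{\Q_T}$-measure (the exceptional set being $\{h: h\notin \X\}$, which is null).

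\emph{Step 2 (Part (ii)).} The Lipschitz continuity of $b$ combined with the smoothing of the semigroup yields that, for every $k\in \mathcal H_{\Q_T}$, the directional (Gateaux) derivative of $\gamma_x$ at $h$ in direction $k$ exists in the almost-everywhere sense and equals
\[
[\mathcal M\gamma_x(h)\cdot k](t)\;=\;\int_0^t e^{(t-s)A}\,Db(h(s)+e^{sA}x)\,k(s)\,ds,
\]
where $Db$ is interpreted in the Rademacher sense along $\mathcal H_{\Q_T}$ directions. The resulting operator is bounded into $\mathcal H_{\Q_T}$ (same maximal regularity argument as in Step 1), so $\gamma_x \in D(\mathcal M)$. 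Adaptedness is immediate from the Volterra structure of \eqref{e3.2}: $[\gamma_x(h)](t)$ depends only on $h|_{[0,t]}$. Combined with the $L^2$ integrability furnished by Step 1, this gives $\gamma_x \in D(\mathcal M^*)$, and, because $\gamma_x$ is adapted, $\mathcal M^*(\gamma_x)$ coincides with the It\^o integral $I(\gamma_x)$ (this is exactly Proposition \ref{p10}).

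\emph{Step 3 (Part (iii)).} The operator $\mathcal M\gamma_x(h)$ has kernel supported on $\{(t,s): s \le t\}$, hence it is a Volterra integral operator. The smoothing of $e^{tA}$ together with Hypothesis \ref{h1}(i) and the boundedness of $Db$ (Lipschitz constant of $b$) makes it Hilbert--Schmidt on $\mathcal H_{\Q_T}$. Volterra Hilbert--Schmidt operators with bounded measurable kernels are quasi-nilpotent: iterating the kernel inequality on the triangle $0\le s_1\le\dots\le s_n\le t$ shows the iterated operator norm decays like $C^n/n!$. Since all eigenvalues of a quasi-nilpotent operator are zero, the Carleman--Fredholm determinant
\[
{\det}_2\bigl(I-\mathcal M\gamma_x(h)\bigr) \;=\; \prod_{n}(1-\lambda_n)\,e^{\lambda_n}\;=\;1.
\]

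\emph{Step 4 (Part (iv)).} I would now invoke Proposition \ref{p3.1} with $\Y=\X$, $\U=\overline{\Q}_T$, and $\Lambda=F_x$ (so that $\Lambda^{-1}=G_x$, continuous by the discussion after \eqref{e1.7}); the associated $\phi$ of the Proposition is precisely $h-G_x(h)=\gamma_x(h)$. Steps 1--3 verify all hypotheses of Proposition \ref{p3.1}, so
\[
\frac{d(\mathcal N_{\overline{\Q}_T}\circ G_x)}{d\mathcal N_{\overline{\Q}_T}}(h)
\;=\;\exp\!\Big\{-\tfrac12|\gamma_x(h)|^2_{\mathcal H_{\Q_T}}+I(\gamma_x)(h)\Big\},
\]
since $\det_2=1$ and $\mathcal M^*\gamma_x=I(\gamma_x)$. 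Substituting into the change-of-variables identity \eqref{e1.10} (and noting that $\mathcal N_{\overline{\Q}_T}$ is concentrated on $E_{[0,T]}$) yields the formula in (iv).

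\emph{Main obstacle.} The delicate point is Step 3: one must be precise about on which Hilbert space $\mathcal M\gamma_x(h)$ acts (the natural one is $\mathcal H_{\Q_T}$, with norm \eqref{e2.8}), verify that the Volterra kernel indeed gives a Hilbert--Schmidt operator under Hypothesis \ref{h1}(i) (i.e.\ make the trace-class input of $(-A)^{-1+\beta}$ enter correctly when estimating the Hilbert--Schmidt norm via the graph norm of $A$), and conclude quasi-nilpotency rigorously by the standard iterated-kernel estimate. Everything else reduces to routine Lipschitz/adaptedness and the black-box application of Proposition \ref{p3.1}.
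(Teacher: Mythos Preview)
Your proposal is correct and follows essentially the same route as the paper: maximal regularity from Appendix~A for (i), Lipschitz continuity plus adaptedness for (ii), the Volterra iterated-kernel bound $C^n/n!$ for quasi-nilpotency in (iii), and then Ramer's identity (Proposition~\ref{p3.1}) for (iv). The only cosmetic discrepancies are that the paper takes $\Y=E_{[0,T]}$ rather than $\X$ and records the Malliavin derivative with an explicit $(\overline{\Q}_T)^{1/2}$ factor (viewing it as an operator on $\X$ instead of $\mathcal H_{\Q_T}$); the Hilbert--Schmidt verification you single out as the main obstacle is in fact glossed over in the paper, which simply asserts the $1/n!$ bound and passes directly to ${\det}_2=1$.
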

\begin{proof}
 We start from the identity
 $$
(\P\circ Z_x^{-1})(\Phi)= 
\int_{E_{[0,T]}}\Phi(h+e^{\cdot A}x)\,(\mathcal N_{\Q_T}\circ G_x)(dh),\quad \forall\,\Phi\in  B_b(E_{[0,T]})
 $$
 and apply    Proposition \ref{p3.1}, with $\Y=E_{[0,T]}$,    $\Lambda=F_x$, $\phi=\gamma_x$ given by \eqref{e3.2} , $\U=\Q_T $ and moreover
  proving that $\mbox{\rm det}_2\,(I-\mathcal M(\gamma_x)](h))=1$.
  
  We proceed in different steps.\medskip
  
  {\it Step 1}. 
     $ \gamma_x(h)\in \mathcal H_{\Q_T}, a.e.$ 

 
In fact, it results
$$
 \gamma_x(h)=e^{\cdot A}*b(h+e^{\cdot  A}x), \quad a.e.\,h\in\X. \nor 
$$
So, taking into account \eqref{e2.8} yields
$$
\|u\|^2_{\mathcal H_{\overline{\Q}_T}}=|u|^2_{\Gamma_{A}(H)}+|(-A)^{1/2}u(T)|_H 
$$
 Now   Step 1 follows from \eqref{eA5} and \eqref{eA7}  using the Lipschitz assumption on $b$. \nor
    \medskip

  {\it Step 2}.  $\gamma_x$ belongs to  $D(\mathcal M^*) $ and it is It\^o integrable.
  
   First note  that $\gamma_x$ belongs to  $D(\mathcal M)$.
In fact, due to  Hypothesis \ref{h1}, the mapping
 $$
h\in   E_{[0,T]} \nor \to \gamma_x(h)\in  E_{[0,T]} ,   $$
is Lipschitz, so that 
 $\gamma_x$  belongs to the domain of $\mathcal M$ by a well known result,
 see e.g., \cite{Bo98}. Now    it follows that  $\gamma_x\in D(\mathcal M^*)$ for all $x\in H$, see \cite[pag. 240]{Nu06} or \cite{Bo98},  \blu where $\mathcal M^*$ is indicated by $-\delta$\nor. Moreover, as we notice, $\gamma_x$ is adapted to the natural filtration of the Brownian motion, so that it is It\^o integrable. 
 Step 2 is proved.\medskip 
 
 To prove the last step let  us  first recall that if $T\in \mathcal L(\X)$ is Hilbert--Schmidt  with real eigenvalues   $(k_n)$, then $$\mbox{\rm det}_2(I-T)=\prod_{n=1}^\infty (1-k_n) e^{-k_n}.$$
  	If, in addition, $T$ is  {\em quasi--nilpotent}, it results  
   $$\mbox{\rm det}_2(I-T)=1,$$
   because in this case all $k_n$ are zero.

 {\it Step 3}. $\mathcal M(\gamma_x)(k)$ is {\em quasi--nilpotent} for any  $k\in \X$, so that 
 $$\mbox{\rm det}_2(I-[\mathcal M(\gamma_x)]  (h) ) =1, \nor \quad h\in\X.$$
  Assume for a moment that  $b$ is   $C^1$. Then we have 
$$
[\mathcal M \nor(\gamma_x)(k)\cdot h](\cdot)=(\overline{\Q}_T)^{1/2}\int_0^\cdot e^{(\cdot-s)A}b^\prime(k(r)+e^{rA}x)\cdot h(r)\,dr.
 $$
 It follows by recurrence that,
$$
 \| \mathcal M \nor  (\gamma_x)(k)\|_{\mathcal L(\X)}^n\le \tfrac1{n!}\,\|\overline{\Q}_T\|_{\mathcal L(\X)}^{n/2}\,\|b^\prime\|^n_\infty T^n,
$$
so that, from the inequality $\|(\mathcal M(\gamma_x)(k))^n\|_{\mathcal L(\X)}\le \| \mathcal M  (\gamma_x)(k)\|_{\mathcal L(\X)}^n$, we have
 $$
\lim_{n\to\infty}\,\|(\mathcal M(\gamma_x)(k))^n\|_{\mathcal L(\X)}^{1/n}=0,\, \quad\forall\,h\in \X,\,x\in H,
$$
which implies that  $\mathcal M(\gamma_x)(k)$ is {\em quasi--nilpotent} for all  $k\in \X$, as required.
If  $b$ is not  $C^1$ we proceed by approximation.
 
The proof  of the theorem is complete.
\end{proof} 
  Let $P_t,\;t\ge 0,$ be the transition semigroup corresponding to the process $Z_x$,
$$
P_t\varphi(x)=\E[\varphi(Z_x(t))],\quad \varphi\in B_b(H).
$$

\blu From (iv) in Theorem \ref{t3.1}
it follows \nor 
 \begin{Corollary}
\label{c3.3}
It results
\begin{equation}
  \label{e3.3}
P_T\varphi(x)= \E[\varphi(Z_x(T))]=\int_\X\varphi(k(T)+e^{TA}x)\,\exp\left\{ -\tfrac12|\gamma_x(k)|^2_{\mathcal H_{\Q_T}}  + [I(\gamma_x)](k)\right\}\,\mathcal N_{\Q_T} (dk),\quad \varphi\in B_b(H).
 \end{equation} 
 \end{Corollary}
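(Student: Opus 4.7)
The plan is to obtain the formula as a direct specialization of Theorem \ref{t3.1}(iv). The required test function is $\Phi \in B_b(E_{[0,T]})$ defined by $\Phi(h) := \varphi(h(T))$ for $h \in E_{[0,T]} = C([0,T];H)$.

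First I would verify that this $\Phi$ is admissible for Theorem \ref{t3.1}(iv). The evaluation map $h \mapsto h(T)$ is continuous from $E_{[0,T]}$ (equipped with the sup-norm) to $H$, and since $\varphi \in B_b(H)$, the composition $\Phi = \varphi \circ \mathrm{ev}_T$ is bounded and Borel measurable on $E_{[0,T]}$.

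Next, I would simply substitute this $\Phi$ into the mild Girsanov identity proved in Theorem \ref{t3.1}(iv). The left-hand side reads
$$
(\P\circ Z_x^{-1})(\Phi) = \E[\Phi(Z_x(\cdot))] = \E[\varphi(Z_x(T))] = P_T\varphi(x),
$$
while the integrand on the right-hand side becomes
$$
\Phi(h+e^{\cdot A}x) = \varphi\bigl((h+e^{\cdot A}x)(T)\bigr) = \varphi(h(T)+e^{TA}x).
$$
This immediately yields the asserted identity, with $k$ playing the role of the dummy variable $h$.

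The only small point to address is the discrepancy between the integration domain in the corollary (written as $\X = L^2(0,T;H)$) and the one in Theorem \ref{t3.1}(iv) (which is $E_{[0,T]}$). Since $\mathcal N_{\overline{\Q}_T}$ is concentrated on $E_{[0,T]} \subset \X$ (see Section 2), and the Cameron--Martin spaces of $\mathcal N_{\Q_T}$ and $\mathcal N_{\overline{\Q}_T}$ coincide by Proposition \ref{p2.2}, the integrand is $\mathcal N_{\overline{\Q}_T}$-a.s.\ well defined on $\X$ and the two integrals agree. There is no genuine obstacle here — Corollary \ref{c3.3} is a straightforward corollary of Theorem \ref{t3.1}(iv) applied to the particular bounded Borel functional $\Phi = \varphi \circ \mathrm{ev}_T$.
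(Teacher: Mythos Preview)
Your proof is correct and follows exactly the paper's approach: the corollary is stated as an immediate consequence of the mild Girsanov formula (Theorem \ref{t3.1}(iv), equivalently identity \eqref{e1.14}), obtained by choosing $\Phi(h)=\varphi(h(T))$. Your remark on the domain $\X$ versus $E_{[0,T]}$ is a welcome clarification the paper leaves implicit.
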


 \subsection{Some estimates}
 
 Here we will not distinguish between  $\mathcal N_{\Q_T}$ and $\mathcal N_{\overline{\Q}_T}$. Moreover, \nor
   in this section we assume, besides Hypothesis \ref{h1}, that $b$ is bounded.
Obviously, recalling \eqref{ei1.11}, $\varrho(x,k)\in L^1(\X,\mathcal N_{\Q_T}$) for all  $x\in H$, but we do not  know whether 
$$
\int_\X\varrho^n(x,k)\,\mathcal N_{\Q_T}(dk)<\infty,\quad n\ge 2,
$$
or not.
An estimate for $\int_\X\varrho^n(x,h)\,\mathcal N_{\Q_T}(dh)$ is provided by the following result.
 \begin{Proposition}
\label{p3.9}
 Assume  Hypothesis \ref{h1}  with $b$ bounded and let $x\in H$ and $n>1. $Then it results
\begin{equation}
\label{e3.16}
\int_\X\varrho^n(x,h)\,N_{\Q_T}(dh)\le  \exp\left\{(n^2-n)\,\|b\|^2_\infty\, T\right\}.
\end{equation}

\end{Proposition}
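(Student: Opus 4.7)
The plan is to rewrite $\rho^n$ as the product of a mean-one exponential martingale and a deterministically bounded factor. Completing the square by adding and subtracting $\tfrac{n^2}{2}|\gamma_x(h)|^2_{\mathcal H_{\Q_T}}$ in the exponent yields
\[
\rho^n(x,h)\;=\;\tilde\rho_n(x,h)\cdot\exp\!\left\{\tfrac{n^2-n}{2}\,|\gamma_x(h)|^2_{\mathcal H_{\Q_T}}\right\},\qquad \tilde\rho_n(x,h):=\exp\!\left\{n I(\gamma_x)(h)-\tfrac{n^2}{2}|\gamma_x(h)|^2_{\mathcal H_{\Q_T}}\right\}.
\]
The factor $\tilde\rho_n$ is precisely the mild Girsanov density produced by Theorem \ref{t3.1} when the drift $b$ is replaced by the scaled drift $nb$: the associated vector field becomes $n\gamma_x$, with squared Cameron--Martin norm $n^2|\gamma_x|^2_{\mathcal H_{\Q_T}}$ and It\^o integral $nI(\gamma_x)$. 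Since $nb$ inherits boundedness and Lipschitz continuity from $b$, Theorem \ref{t3.1} applied to the rescaled equation with $\Phi\equiv 1$ gives $\int_{\X}\tilde\rho_n(x,h)\,\mathcal N_{\Q_T}(dh)=1$. Equivalently and more elementarily, since $b$ is bounded and $\gamma_x$ is adapted to the natural filtration (Step 2 of the proof of Theorem \ref{t3.1}), Novikov's criterion applies trivially and $\tilde\rho_n$ is a bona fide exponential martingale of mean one.

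For the second factor I would use the identification $|\gamma_x(h)|^2_{\mathcal H_{\Q_T}}=\int_0^T |b(h(s)+e^{sA}x)|^2_H\,ds$, which is obtained by applying the Cameron--Martin norm formula of Proposition \ref{p2.2} to $u=\gamma_x(h)$ and inserting $u'(t)=b(h(t)+e^{tA}x)+Au(t)$: the cross-term $2\int\langle b,Au\rangle\,dt$ and one copy of $\int|Au|^2\,dt$ cancel against $|(-A)^{1/2}u(T)|^2_H$ through the very integration by parts performed in the proof of Proposition \ref{p2.2}, leaving exactly $\int_0^T|b(h(s)+e^{sA}x)|^2_H\,ds$. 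Boundedness of $b$ then yields the deterministic pointwise bound $|\gamma_x(h)|^2_{\mathcal H_{\Q_T}}\le T\|b\|^2_\infty$, which can be pulled out of the integral; combining with $\int\tilde\rho_n\,d\mathcal N_{\Q_T}=1$ gives the claimed exponential estimate.

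The main technical obstacle is the rigorous verification that $\tilde\rho_n$ integrates to one. At the formal level this is immediate from Theorem \ref{t3.1} applied to $nb$, but one should double-check that all hypotheses of Ramer's identity (Proposition \ref{p3.1}) survive the scaling — in particular, membership of $n\gamma_x$ in $D(\mathcal M)\cap D(\mathcal M^*)$, its It\^o integrability, and the vanishing of the Carleman--Fredholm correction $\det_2(I-\mathcal M(n\gamma_x))=1$ (immediate since multiplying by the scalar $n$ preserves quasi-nilpotency of $\mathcal M\gamma_x$ as established in Step~3 of the proof of Theorem \ref{t3.1}). Once this is in place, the two-factor decomposition delivers the bound without any further estimates.
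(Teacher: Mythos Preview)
Your argument is exactly the paper's: factor $\rho^n=\tilde\rho_n\cdot\exp\{\tfrac{n^2-n}{2}|\gamma_x|^2_{\mathcal H_{\Q_T}}\}$, recognise $\tilde\rho_n$ as the mild Girsanov density for the rescaled drift $nb$ (hence of mean one), and bound the remaining factor deterministically via $\|b\|_\infty$. Your exact identity $|\gamma_x(h)|^2_{\mathcal H_{\Q_T}}=\int_0^T|b(h(s)+e^{sA}x)|^2_H\,ds\le T\|b\|^2_\infty$ is in fact sharper than the paper's intermediate estimate \eqref{e3.19} and produces the exponent $\tfrac{T}{2}(n^2-n)\|b\|^2_\infty$ rather than the stated $(n^2-n)\|b\|^2_\infty$; the mismatch comes from what appears to be a dropped $T$-factor in the last inequality of \eqref{e3.19}, not from any defect in your reasoning.
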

\begin{proof}
Obviously we have
\begin{equation}
\label{e3.17}
\int_\X\varrho(x,h)\,\mathcal N_{\Q_T}(dh)=1.
\end{equation}
To estimate  $\varrho^n$ we write
  
$$\varrho^n(x,h)=\exp\left\{-\tfrac{n}2| \gamma_x(h)|^2_{\mathcal H_{\Q_T}} +n[I(\gamma_x)](h)\right\}.$$
Then setting
$$\widetilde{\varrho}(x,h)=\exp\left\{-\tfrac{n^2}2|\gamma_x(h)|^2_{\mathcal H_{\Q_T}}  +n[ I(\gamma_x)](h)\right\},   
$$
we see that  $\widetilde\varrho$ is obtained by $\varrho$ replacing $b$ with $nb$, so that
\begin{equation}
\label{e3.18}
\int_\X\widetilde\varrho(x,h)\,\mathcal N_{\Q_T}(dh)=1.
\end{equation} 
Therefore, taking into account that
$$\varrho^n(x,h)=\exp\left\{\tfrac{n^2-n}2\, |\gamma_x(h)|^2_{\mathcal H_{\Q_T}} \right\} \,\widetilde\varrho(x,h),$$
 \nor   and that  
     \begin{equation}
\label{e3.19} 
|\gamma_x(h)|^2_{\mathcal H_{\Q_T}} = |\Q_T^{-1/2}\gamma_x(h)|^2_\X \le  2|b(k(\cdot)+e^{\cdot A}x)|_\X^2\le  2\|b\|^2_\infty\,T,
 \end{equation}
  we find
$$\varrho^n(x,k)\le \exp\left\{(n^2-n)\,\|b\|^2_\infty\,T\right\} \,\widetilde\varrho(x,k).$$
Integrating with respect to $\X$ and taking into account \eqref{e3.18},
yields the conclusion.
\end{proof}

 \begin{Remark}
 \em
 By Proposition \ref{p3.9} and \eqref{e18aa} we can find an estimate of  $|I(\gamma_x)(h)|^2_{L^2(\X)}$ \nor as follows.
Write
 \begin{equation}
\label{e3.321}
 \begin{array}{l}
\ds  
  |I(\gamma_x)|^2_{L^2(\X)}   \nor 
=\int_{\X} \left|\int_{0}^T\langle[\gamma_{x}(h)](t),dB(t)\rangle_H\right|^2  \mathcal N_{\Q_T}(dh) \nor =  \int_{\X} \int_{0}^T\left|[\gamma_{x}(h)](t)\right|^2_H\,dt \mathcal N_{\Q_T}(dh) \nor 
\\
\\
\ds=   \int_{\X} \int_{0}^T\left|\int_{0}^te^{(t-r)A}\,b(h(r)+e^{rA}x)dr\right|^2_H\,dt  \mathcal N_{\Q_T} (dh)   \nor \\
\\
\ds\le \|b\|_\infty^2\int_{0}^T\int_{0}^t\,e^{-2\omega r}\, dr\,dt=\frac1{2\omega}\|b\|_\infty^2\int_0^T(1-e^{-\omega t})dt\le
\frac{T}{2\omega}\|b\|_\infty^2
\end{array}
\end{equation}

 \end{Remark}

   \section{The case when $b$ dissipative}

   \subsection{Preliminaries on  Gaussian measures on locally 
 convex spaces}
  We follow here \cite{Bo98}.  We are given a complete, separable,  locally  convex space $E$.
 We denote by $E^*$ the topological dual of $E$ and by $\mathcal B(E)$ the $\sigma$-algebra of all Borel subsets of $E$.

 A probability measure  $\mu$ on $(E,\mathcal B(E))$ is {\em Gaussian} if and only for any $F\in E^*$ the law  $\mu\circ F^{-1}$ of $F$ is Gaussian on $\R$.
 
Let $\mu$ be a Gaussian measure on $(E,\mathcal B(E))$. Then the {\em mean} of $\mu$ is  defined    as the linear functional $m:E^*\to \R$ given by,
\begin{equation}
\label{es1}
m(F)=  \int_E F(h)\mu(dh),\quad F\in E^*.
\end{equation}
The {\em covariance}  of $\mu$ is the mapping $Q\in \mathcal L(E^*, E^{**})$ defined as
$$
Q(F)=\int_E(F(h)-m(F))(h-m)\,\mu(dh),\quad F \in E^*.
$$

It follows that 
\begin{equation}
\label{es2}
 G(Q(F))=\int_E(F(h)-m(F))(G(h)-m(G))\,\mu(dh),\quad F,\,G\in E^*.
\end{equation}
  In particular, we have  
 \begin{equation}
\label{es3}
 F(Q(F))=\int_E(F(h)-m(F))^2\,\mu(dh),\quad F\in E^*.
\end{equation}
We note that the definitions of  $m$ and  $Q$ are meaningful thanks to the Fernique Theorem, see \cite[Theorem 2.8.5]{Bo98}.

  We denote by $\mathcal N_{m,Q}$ the Gaussian measure with mean $m$ and covariance $Q$.  If $m=0$, $\mu$ is said to be {\em symmetric} and is denoted by  $\mathcal N_{Q}$. All Gaussian measures considered in what follows are symmetric. In this case it results
 \begin{equation}
 \label{e0}
 F(Q(F))=\int_E(F(h) )^2\,\mu(dh),\quad F\in E^*
 \end{equation}
and
$$
  G(Q(F))=\int_E F(h)\,G(h )\,\mu(dh),\quad F,\,G\in E^*.
  $$
    $E^*$, endowed with the inner product
    $$(F,G)_\mu=G(Q(F)), \quad F,G\in E^*,$$
     is a pre--Hilbert space. We denote by $E^*_\mu$ its completion.
    
      Note that $Q$ is extendible  to $E^*_\mu$, setting. 
  $$
  Q(\psi)=\int_E \psi(k)\,k\,\mu(dk),\quad \psi\in  L^2(E, \mu).
  $$
 
 \medskip 
 
 We define    the {\em Cameron-Martin space} $\mathcal H_\mu$ of $\mu$  following \cite[pag. 44]{Bo98}
\begin{equation}
\label{es4}
 \mathcal H_\mu =\left\{h\in E:  \sup\Big\{F(h) : F\in E^*, \;\int_E |F(k)|^2\,\mu(dk)\le 1\Big\}<+\infty\right\}.
 \end{equation}
 If $h\in \mathcal H_\mu$ we set   following \cite[pag. 44]{Bo98}
 \begin{equation}
\label{es5}
|h|_{\mathcal H(\mu)}= 
\sup\Big\{F(h) : F\in E^*, \;\int_E |F(k)|^2\,\mu(dk)\le 1\Big\}.
 \end{equation}
It useful to notice that $h\in \mathcal H_\mu$ if and only if there exists  $\psi\in E^*_\mu$  such that  $h=Q(\psi)$.
 See \cite[Lemma 2.4.1]{Bo98}

 \subsection{Stationary process}   
 
According to Section 11.2 in \cite{DaZa02} we shall assume that
 \begin{Hypothesis}
\label{h2}
(i) Hypothesis \ref{h1} holds.\medskip

 (ii) $b\colon H\to H$ is dissipative, i.e., $\langle b(x)-b(y), x-y \rangle_H \le 0,$ for any $x,y \in H$.
 \end{Hypothesis}

 We start from  problem \eqref{e1.1}
whose solution we denote by $Z_x$ whereas the corresponding transition semigroup will be denoted by
$$
P_t\varphi(x)=\E[\varphi(Z_x(t))], \quad \varphi\in B_b(H),\,t\ge 0.
$$
It is convenient  to   modify problem  \eqref{e1.1}
by taking into account   a generic initial time $-n$
with $n\in\N.$
So, we consider the problem
\begin{equation}
\label{e5.1}
\left\{\begin{array}{l}
dZ(t)=(AZ(t)+b(Z(t))dt+dW(t),\quad t\ge -n,\\
\\
Z(x,-n)=x\in H,
\end{array}\right.
\end{equation}
where  $W$  is  an $H$--valued  cylindrical Wiener process defined for all  $t\in \R$ in the usual way.

Under Hypothesis 1 and the dissipativity of $b$, there is  a unique  solution  $Z_{x,-n}$ of the  mild equation
  \begin{equation}
\label{e5.2}
Z_{x,-n}(t)=e^{(t+n)A}x+\int_{-n}^te^{(t-r)A}\,b(Z_{x,-n}(r))dr+ W_{A,-n}(t),\quad t>-n,
\end{equation}
where   $ W_{A,-n}(t)$ denotes the modified  {\em stochastic convolution}
$$
 W_{A,-n}(t)=\int_{-n}^t e^{(t-r)A}\,dW(r).
$$
Moreover, there exists the limit
$$
Z_{\R}(t):=\lim_{n\to \infty}\,Z_{x,-n}(t),\quad \forall\, t\in\R
$$
and
we know by \cite{DaZa14} that $Z_{\R}$ is the unique in law, stationary solution of problem (1.1).  As it is well known, for all $t\in\R$ the law of $Z_{\R}(t)$ coincides with
    the unique invariant measure of $Z_x$ that we denote by $\nu$.

The stationary process $Z_{\R}$ is the solution to the mild limit equation
\begin{equation}
\label{e5.3}
Z_{\R}(t)=\int_{-\infty}^te^{(t-r)A}\,b(Z_{\R}(r))\,dr+  W_{A,\R}(t),\quad t\in \R
\end{equation}
where   $  W_{A,\R}$ denotes the modified  {\em stochastic convolution}
 $$
 W_{A,\R}(t)=\int_{-\infty}^t e^{(t-r)A}\,dW(r),\quad t\in\R.
$$
Clearly,  $ W_{A, \R}$ does not  live in $C_b(\R;H)$ but in $E_{\R}:=C(\R;H)$, the space of all continuous functions $\R\to H$. $E_{\R}$ is a separable  Fr\'echet space   endowed with   the set of  seminorms
  $$
  p_{j}(h)=\sup_{t\in [-j,j]}|h(t)|,\quad j\in\N,\;h\in E_\R.
  $$

 Let us also consider  the deterministic equation
\begin{equation}
\label{e5.4}
k(t)=\int_{-\infty}^t e^{(t-s)A}b(k(s))ds+h(t),\quad  t\in\R,\,h\in E_{\R},
\end{equation}
which has a unique solution $k_{\R}$  by a standard fixed point argument as in \eqref{e1.5}.  We have
 $k_{\R}=F_{\R}(h)$, where
 $$
F_{\R}\colon E_{\R}\to E_{\R},\quad h\to k_{\R}.
 $$
 $F_{\R}$ is a homeomorphism of $E_{\R}$ onto itself. We denote by $G_{\R}$ its inverse, so that
$$
G_{\R}(h)=h-\int_{-\infty}^\cdot e^{(t-s)A}b(h(s))ds.
$$
Finally, we define 
  \begin{equation}
\label{e5.5}
 [\gamma_{\R}(k)](t)=\int_{-\infty}^te^{(t-r)A}b(k(r))\,dr,\quad k\in E_{\R},\, t\in \R.
 \end{equation} 
 Therefore the solution of  \eqref{e5.3}  is given by
$$
Z_{\R}=F_{\R}(W_{A,\R}).
$$

We now consider   the law   $\mathcal N_{\Q_{\R}}$ of $W_{A,\R}$  on  $E_{\R}$  which is Gaussian  with mean zero and covariance   $\Q_{\R}$, given in Proposition \ref{p5.4} below. So, for all $\Phi\colon E_{\R}\to \R$  bounded and Borel we have
 $$
 (\mathcal N_{\Q_{\R}}\circ W_{A,\R}^{-1})(\Phi)= \E[ \Phi(W_{A,\R})]=\int_{E_{\R}}\Phi(h)\,\mathcal N_{\Q_{\R}}(dh).
 $$
 Before proving Proposition \ref{p5.4},
  we  recall  that the dual $E^*_{\R}$ of $E_{\R}$
    coincides with the space of all functions from $BV(\R;H)$  with a compact support.
If $F\in BV(\R;H)$ we shall write
 $$F(h)=\int_{-\infty}^{+\infty}\langle h(t),dF(t)\rangle_H,\quad \forall\, h\in E_{\R}.
 $$
 where the integral is intended in the sense of Stieltjes.
 We are now ready to show
  \begin{Proposition}
\label{p5.4}
 The law of  $W_{A,\R}$ in $E_{\R}$   is   the Gaussian  measure with mean $0$ and covariance $\Q_{\R}$
 given by
 \begin{equation}
\label{e5.7}
 F( \Q_{\R}(F))  =\frac1{2}  \int_{-\infty}^{+\infty}
 \int_{-\infty}^{+\infty}\langle(-A)^{-1}e^{|t-t_  1|A}dF(t),dF(t_1)\rangle_H  ,\quad \forall\,F\in  E^*_{\R}.
\end{equation}
Moreover, the corresponding Cameron--Martin space, denoted by $\mathcal H_{\Q_{\R}}$, is given by
\begin{equation}
\label{e5.77}
  \mathcal H_{\Q_{\R}}=
L^2(\R;D(A))\cap  W^{1,2}(\R;H).$$
\end{equation}
Finally,   if $u\in \mathcal H_{\Q_{\R}}, $
we have
 \begin{equation}
\label{e2.88}
 |u|^2_{\mathcal H_{\Q_{\R}}}=  \int_{-\infty}^{+\infty} (| u^\prime(t)|_H^2+| Au(t)|_H^2)\,dt.
\end{equation}
    \end{Proposition}
    \begin{proof}
    Let $F\in E^*_{\R}$: then by equation \eqref{e0} it follows that 
 $$
 F(\Q_{\R}(F))=\int_{\E_{\R}}|F(h)|^2\,\mathcal N_{\Q_{\\R}}(dh)=\E [|F(W_{A,\R)}|^2].
 $$
 Now, setting
  $$ F(h)=\int_{-\infty}^{+\infty}\langle h(t),dF(t)\rangle_H,\quad \forall\, h\in E_{\R},
 $$
 equation \eqref{e5.7} follows by direct computation of the variance of $W_{A,\R}$, equations \eqref{e5.77} and \eqref{e2.88} follow by analogous computations as in the proof of Lemma \ref{l2.1} and Proposition \ref{p2.2}.
 \end{proof}
  
 Now, we shall proceed as  in Theorem 3.1 proving, with the help of  the Ramer identity, that
$$
\mathcal N_{\Q_{\R}}\circ G_{\R} \ll \mathcal N_{\Q_{\R}},
$$
where  $G_\R= F^{-1}_{\R}$
and there exists $\varrho_{\R}\in L^1(E_{\R},\mathcal N_{\Q_{\R}}  )$  such that
\begin{equation}
\label{e5.100}
\varrho_{\R}(h)=\exp\left\{-\tfrac12|\gamma_{\R}(h)|^2_{\mathcal H_{\Q_{\R}}} + [\mathcal M_{\R}^*(\gamma_{\R})](h)\right\}.
  \end{equation}
 Here $\gamma_{\R}$ is defined by \eqref{e5.5} and $-\mathcal M^*_{\R}$ is the Skorokhod integral.
 Then we obtain
   \begin{Theorem}
\label{t4.3}
   Assume that Hypothesis \ref{h2} is fulfilled. Then the  law of  the stationary process $Z_{\R}$  in $E_{\R}$  is given by
    \begin{equation} 
  \label{e5.24}
 [\P\circ Z_{\R}^{-1}](\Phi)=  \E[\Phi(Z_{\R})]=\int_{\E_{\R}}\Phi(h)\, 
   \varrho_{\R}(h)\,\mathcal N_{\Q_{\R}}(dh),\quad \forall \,\Phi\in B_b(E_{\R}),
   \end{equation}
  where $\gamma_{\R}$ is defined by \eqref{e5.5} and $\varrho_{\R}$ by   \eqref{e5.100}.
  \end{Theorem}
  \begin{Remark}
  \em
  \label{r0}
  The reversed stationary process
 is given by
  $$
  \overline{Z}_{\R}(t)=Z_{\R}(-t),\quad t\in \R.
  $$
   \end{Remark}

\subsection{Invariant measures}   
 
   Assume  first  that $b=0$ and consider  the Ornstein--Uhlenbeck process
     \begin{equation} 
  \label{e5.24b}
  Y_{x}(t)=e^{tA}x+\int_0^t e^{(t-s)A}\,dW(s),\quad t\ge 0,
   \end{equation}
 and  its corresponding  stationary process  $Y_{\R}$,  
    \begin{equation} 
  \label{e5.24c}
 Y_{\R}(t):= \int_{-\infty}^t e^{(t-s)A}\,dW(s),\quad t\in\R.
   \end{equation}
 For any $t\in \R$ the   invariant measure of $Y_{\R}(t)$ is $\mu=:N_{1/2(-A)^{-1}}$.

   Let  moreover $P_t\varphi(x)= \E[ \varphi(Z_x(t))]$,  $R_t\varphi(x)= \E[\varphi( Y_x(t))]$ for all $x\in H,\,t\ge 0,\,\varphi\in B_b(H)$.

     \begin{Corollary}
 It results
  \begin{equation} 
  \label{e5.2423}
 [\P\circ Y_{\R}^{-1}](\Phi)= \int_{E_{\R}}\Phi(h)\, 
   \mathcal N_{\Q_{\R}}(dh),\quad \Phi\in B_b(E_{\R})
   \end{equation}
   and
    $\P\circ Z_{\R}^{-1}\ll \P\circ Y_{\R}^{-1}$.
\end{Corollary}
  \begin{proof}
Equation  \eqref{e5.2423} follows   by Theorem \ref{t4.3}, setting  $b=0$.
 Then 
the last statement follows  from comparing \eqref{e5.2423} with \eqref{e5.24}.
  \end{proof}
   \begin{Theorem}
\label{t3.3z}
Assume that Hypothesis \ref{h2} is fulfilled. 
Let $\nu$ and $\mu$ be  the invariant measures of $P_t$ and $R_t$ respectively.Then it results
 \begin{equation}
  \label{e3.3b}
\nu(\varphi)= \int_{E_{\R}}\varphi(k(0))\exp\left\{-\tfrac12|\gamma_{\R}(k)|^2_{\mathcal H_{\Q_{\R}}} + [\mathcal M_{\R}^*(\gamma_{\R})](k)\right\}\,  \mathcal N_{\Q_{\R}} (dk),\quad \forall\,\varphi\in B_b(H).
  \end{equation}
Moreover,
 \begin{equation}
  \label{e3.3c}
\mu(\varphi)= \int_{E_{\R}}\varphi(k(0))\, \mathcal N_{\Q_{\R}} (dk),\quad \forall\,\varphi\in B_b(H).
 \end{equation} 
 Consequently 
  \begin{equation}
  \label{e4.22}
   \nu \ll \mu.
  \end{equation} 
 \end{Theorem}
 \begin{proof}
 Setting  in \eqref{e5.24} $\Phi(h)=\varphi(h(0))$ yields \eqref{e3.3b}. The other statements are straightforward.
 \end{proof}
  A final result, yields a formula  for the density $\tfrac{d\nu}{d\mu}$.
   \begin{Proposition}
   \label{p4.6}  Let $E= E_{\R}.$
 Let  $p\colon E \to H$ be defined by\; $p(h) = h(0),\;  h \in E$ with $p^{-1}(x)=\{ h\in E:\,h(0)=x\}, \;\; x \in H$, setting 
\begin{gather*}
 \psi (x) = \int_{p^{-1}(x)} \varrho_{\R}(h)m_x(dh) \;=\E\big[\varrho_\R(\cdot)\mid k(0)=x\big] 
\end{gather*} 
we find that $\psi = \tfrac{d\nu}{d\mu}$, $\mu$-a.s.
 \end{Proposition}
 \begin{proof}
 We use a  disintegration argument, see Appendix B. Apply
 Theorem \ref{tA.1}   with  $\lambda =N_{\Q_{\R}}$, $E=E_{\R}$. 
 By \eqref{e4.22} we know that $\mu = \lambda \circ p^{-1}$ and 
 that there exists a family of  Borel measures $(m_x)_{x\in H}$ 
in $(E,\mathcal B(E))$  such that the support of $m_x$
is included in $p^{-1}(x)$, for $\mu$-almost all  $x\in H$, and
\begin{gather*}
\int_H \varphi(x) \,\nu(dx) = \int_E \varphi(h(0))\, \varrho_{\R}(h)\lambda(dh)
= \int_H \varphi(x) \left(\int_{p^{-1}(x)} \varrho_{\R}(h)m_x(dh)   \right)\,\mu(dx),
\end{gather*}
 for any $\varphi : H \to \mathbb{R}$ Borel and bounded.  We could also proceed directly,
 observing that
 $$
 \nu(\varphi)= \int_{E_{\R}}\varphi(k(0))\varrho_\R(k)\mathcal N_{\Q_{\R}} (dk)=
 \int_H \varphi(x)\,\E\big[\varrho_\R(\cdot)\mid k(0)=x\big] \,\mu(dx)
 $$
  
 \end{proof}

\section{Colored noise}  

We are  here concerned with the following  stochastic differential equation on $H$,
\begin{equation}
\label{e9.1}
\left\{\begin{array}{l}
dV(t)=(AV(t)+b(V(t))dt+(-A)^{-\epsilon/2}\,dW(t),\quad t\ge 0\\
\\
V(0)=x\in H,
\end{array}\right.
\end{equation}
assuming,  the following
\begin{Hypothesis}
\label{h3}
(i) $\epsilon>0.$ \medskip 

(ii) Hypothesis \ref{h1}(i)-(iii)  are fulfilled.\medskip

(iii) $(-A)^\epsilon\,b$ is Lipschitz in $H$.
 \end{Hypothesis}
 Proceeding as  in Section 1,  we see that problem \eqref{e9.1} has a unique mild solution  $V_x$  which is   a continuous  adapted process,
\begin{equation}
\label{e9.2}
V_x(t)=e^{tA}x+\int_0^te^{(t-s)A}\,b(V_x(s))ds+(-A)^{-\epsilon/2}\,W_A(t),\quad t\ge 0,
\end{equation}
 where  $W_A$ still  denote    the {\em stochastic convolution}
$$
  W_A(t)=\int_0^te^{(t-s)A}\, dW(s),\quad t\ge 0.
$$
 We again  fix  $T>0$ and set $E_{[0,T]}=C([0,T];H)$.
As we have seen in Section 2, the law of $W_A$ in $E_{[0,T]}$ is Gaussian $\mathcal N_{\Q_T}$, described on  Section 2 above.\medskip
 
   We are going to determine the 
law of $V_x$ on $E_{[0,T]}$ for all $x\in H$, by proceeding as in Section  1. Setting $V_x(t)-e^{tA}x=L_x(t),$ equation \eqref{e9.2} becomes
 \begin{equation}
\label{e9.3}
L_x(t)=\int_0^te^{(t-s)A}\, b(L_x(s)+e^{sA}x)ds+ (-A)^{-\epsilon/2}\,W_A(t),\quad t\in[0,T].
\end{equation}
 Moreover,  for every $x\in H$ we consider some  mappings defined in \S1 as $k_x,$ $F_x, G_x, \gamma_x$,
  see \eqref{e1.5}, \eqref{e1.6}, \eqref{e1.7} and \eqref{e1.8} respectively.

    Now the solution $L_x(\cdot)$ of \eqref{e9.3} is given by
 \begin{equation}
\label{e9.5}
L_x(\cdot)=F_x((-A)^{-\epsilon/2}\,W_A(\cdot))
\end{equation}
Therefore we have
\begin{equation}
\label{e9.6}
(\P\circ V_x^{-1})(\Phi)=\E [
\Phi(F_x( (-A)^{-\epsilon/2}W_A(\cdot))+e^{\cdot A}x)],\quad \forall\,\Phi\in  B_b(E_{[0,T]}).
\end{equation}
  By the  change of variables $ \Omega\to \X,\;\omega\to W_A(\cdot)(\omega)$ we obtain
\begin{equation}
  \label{e9.7}
 (\P\circ V_x^{-1})(\Phi)=\int_\X\Phi\left[F_x((-A)^{-\epsilon/2}\,h(\cdot))+e^{\cdot A}x\right]N_{\Q_T}(dh),\quad \Phi\in B_b(E_{[0,T]}),\,t\ge 0,
 \end{equation} 
 where $\mathcal N_{\Q_T}$ is the law of $W_A(\cdot)$  in $E_{[0,T]}$.

 Setting  $(-A)^{-\epsilon/2}h=k,$
yields
\begin{equation}
  \label{e9.8}
 (\P\circ V_x^{-1})(\Phi)=\int_\X\varphi\left[F_x(k(\cdot))+e^{\cdot A}x\right]\,\mathcal N_{ \overline{\Q}^\epsilon_T}(dk),\quad \varphi\in B_b(E_{[0,T]}),
 \end{equation} 
 where  $\overline{\Q}^\epsilon_T=(-A)^{-\epsilon}\overline{\Q}_T$. Note that $\overline{\Q}^\epsilon_T$  is obviously  symmetric and of trace class.
 
 Moreover, by the change of variables
$
F_x(h)=k,
$
we obtain
\begin{equation}
\label{e9.9}
(\P\circ V_x^{-1}))(\Phi)= 
\int_\X\Phi(k(\cdot)+e^{\cdot A}x)\,(\mathcal N_{ \overline{\Q}^\epsilon_T}\circ G_x)(dk),\quad \forall\,\Phi\in   B_b(E_{[0,T]}). \nor  
\end{equation}
We shall denote by $ \mathcal H^\epsilon_T = \mathcal H_{\overline{\Q}^\epsilon_T}$  
the Cameron--Martin space  of  $\mathcal N_{\overline{\Q}^\epsilon_T}$, endowed with its natural norm $|h|_{\mathcal H^\epsilon_T}.$
 Arguing as in \S 2 we see that  it is given by
 \begin{equation}
 \label{e9.10}
 \mathcal H^\epsilon_T:=  L^2(0,T;D((-A)^{1+\epsilon/2})\cap  W_0^{1,2}(0,T;D((-A)^{\epsilon/2})),
 \end{equation}
 Moreover
 \begin{equation}
\label{e2.98}
 |u|^2_{\mathcal H^\epsilon_T}:=|\overline{Q^\epsilon_T}^{-1/2}u|^2_\X= |(-A)^{1/2}\,u(T)|_{D((-A)^{\epsilon/2})}^2 +\int_0^T ( | u^\prime(t)|_{D{((-A)^{\epsilon/2})}}^2  +| Au(t)|_{D((-A)^{\epsilon/2})})\,dt.
\end{equation}

 Hypothesis \ref{h3}  ensures that $\gamma_x(k)\in \mathcal H^\epsilon_T$ 
 because 
$$
b(h+e^{\cdot A}x) \in L^2(0,T, D((-A)^{\epsilon/2}). 
$$
Now, by  proceeding  as in Section 3 we prove the following result. 
  \begin{Theorem}
\label{t9.3}
The  law of $V_x$ on $E_{[0,T]}$ is given by
\begin{equation}
\label{e9.12}
(\P\circ V_x^{-1})(\Phi)=\int_\X\Phi(h+e^{\cdot A}x),\varrho_\epsilon(x,k)\,\mathcal N_{\overline{\Q}^\epsilon_T}(dk),\quad \forall \, \Phi\in   B_b(E_{[0,T]}), \nor 
\end{equation}
where
\begin{equation}
\label{e9.13}
\varrho_\epsilon(x,k)=\exp\left\{-\tfrac12|\gamma_x(k)|^2_{\mathcal H^\epsilon_T} +I(\gamma_x)(k)\right\}
\end{equation}
 and
   \begin{equation}
\label{e9.14}
 \gamma_x(k)= \int_0^\cdot e^{(\cdot-s)A}b(k(s)+e^{sA}x)\,ds,\quad x\in H,\,k\in E_{[0,T]}.
 \end{equation}
 \end{Theorem}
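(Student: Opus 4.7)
The plan is to mirror exactly the argument used for Theorem \ref{t3.1}, adapting it to the colored-noise setting. Starting point is identity \eqref{e9.9}, which already expresses the law of $V_x$ as a push-forward of $N_{\Q^\epsilon_T}$ by $G_x$. Hence the task reduces to applying the Ramer identity (Proposition \ref{p3.1}) with $\Y=E_{[0,T]}$, reference Gaussian measure $N_{\Q^\epsilon_T}$, homeomorphism $\Lambda=G_x$ and displacement vector field $\phi=\gamma_x$, where $\gamma_x$ is defined in \eqref{e9.14}. Once the hypotheses of Proposition \ref{p3.1} are verified and the Carleman--Fredholm determinant is shown to equal $1$, combining \eqref{e9.9} with \eqref{e3.1} immediately yields \eqref{e9.12}.

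The verification proceeds in the same four steps as in the proof of Theorem \ref{t3.1}. First, one checks $\gamma_x(h)\in \mathcal H^\epsilon_T$ for $N_{\Q^\epsilon_T}$-a.e. $h$; here Hypothesis \ref{h3}(iii) is essential: since $(-A)^\epsilon b$ is Lipschitz, the map $s\mapsto b(h(s)+e^{sA}x)$ takes values in $D((-A)^{\epsilon})$, so the convolution $e^{\cdot A}*b(h+e^{\cdot A}x)$ enjoys the extra regularity needed to land in $L^2(0,T;D((-A)^{1+\epsilon/2}))\cap W^{1,2}_0(0,T;D((-A)^{\epsilon/2}))$, i.e. in the space \eqref{e9.10}, by the maximal regularity estimates recalled in Appendix A applied to the shifted operator. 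Second, since this same composition is Lipschitz in $h$ as a map $E_{[0,T]}\to \mathcal H^\epsilon_T$, $\gamma_x$ lies in $D(\mathcal M)$ by the standard criterion in \cite{Bo98}, and hence in $D(\mathcal M^*)$. Adaptedness of $\gamma_x$ to the natural filtration of $B$ is immediate from \eqref{e9.14}, so $\mathcal M^*(\gamma_x)$ coincides with the Itô integral $I(\gamma_x)$.

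Third, to establish $\det_2(I-\mathcal M\gamma_x(h))=1$, one argues quasi-nilpotency exactly as in Step 3 of the proof of Theorem \ref{t3.1}: if $b$ is first assumed $C^1$, then
\[
[\mathcal M\gamma_x(k)\cdot h](t)=\int_0^t e^{(t-s)A}b'(k(s)+e^{sA}x)\cdot h(s)\,ds,
\]
a Volterra-type operator; iterating gives an $n$-th iterate whose norm decays like $C^n/n!$, so the spectral radius vanishes and all eigenvalues are zero. The $C^1$ assumption is removed by approximation, using that $(-A)^\epsilon b$ is Lipschitz. Once the three conditions hold, \eqref{e3.1} gives the density
\[
\frac{d(N_{\Q^\epsilon_T}\circ G_x)}{dN_{\Q^\epsilon_T}}(h)=\exp\!\left\{-\tfrac12|\gamma_x(h)|^2_{\mathcal H^\epsilon_T}+I(\gamma_x)(h)\right\}=\rho_\epsilon(x,h),
\]
and inserting this in \eqref{e9.9} produces \eqref{e9.12}.

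The main technical obstacle is Step 1, i.e.\ verifying that $\gamma_x(h)$ actually lies in the \emph{correct} Cameron--Martin space $\mathcal H^\epsilon_T$ rather than the larger $\mathcal H_{\Q_T}$. Because $\mathcal H^\epsilon_T$ demands that $\gamma_x(h)$ take values in $D((-A)^{1+\epsilon/2})$ in an $L^2$-sense and that its derivative live in $D((-A)^{\epsilon/2})$, a gain of $\epsilon$ fractional derivatives over the white-noise case is required. This is precisely what Hypothesis \ref{h3}(iii) supplies, but one has to be careful to use the maximal regularity of $A$ on $D((-A)^{\epsilon/2})$-valued inputs; the remaining measurability/integrability issues (and the approximation of non-smooth $b$) are routine once this regularity gain is in hand.
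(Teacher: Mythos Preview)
Your proposal is correct and follows exactly the approach the paper indicates: the paper's own proof consists merely of the sentence ``by proceeding as in Section 3 we prove the following result,'' together with the observation (stated just before the theorem) that Hypothesis \ref{h3} ensures $\gamma_x(k)\in\mathcal H^\epsilon_T$. Your write-up fleshes out precisely these steps, invoking the maximal regularity result of Corollary \ref{cA.2} for the $D((-A)^{\epsilon/2})$-valued input $b(h+e^{\cdot A}x)$ and then repeating verbatim the Malliavin-differentiability, adaptedness, and quasi-nilpotency arguments from Theorem \ref{t3.1}.
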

 \begin{Remark}
 \em Larger is  $\epsilon>0$ stronger becomes (iii) in Hypothesis \ref{h3}
  and moreover narrow is the Cameron--Martin space.
 The Hypothesis \ref{h3}(iii) becomes stronger and the Cameron-Martin space narrower as $\epsilon>0$ grows larger.
\end{Remark}
 \begin{Remark}
 \em Theorem \ref{t3.3z} and Proposition \ref{p4.6} can be easily generalised
 to the colored noise.
 \end{Remark}
 
 \appendix 
\addcontentsline{toc}{section}{Appendix}

 \section{Maximal regularity   for linear  evolution equations}
Let us consider  an abstract  evolution equation,
 \begin{equation}
\label{eA1}
u^\prime(t)=Au(t)+f(t),\quad u(0)=0,\quad t\in[0,T],
\end{equation}
on   a Hilbert space  $H$, where $A:D(A)\subset H\to H$ is self--adjoint negative and   $f\in L^2(0,T;H)$. 

The following result is well known, see e.g., \cite{Lu18}, we recall the  easy proof, however, for the reader convenience.
 \begin{Proposition}
\label{pA1}
Let
\begin{equation}
\label{eA2}
u(t)=\int_0^te^{(t-s)A}f(s)\,ds=(e^{\cdot A}*f)(t),\quad t\in[0,T].
\end{equation}
Then $u\in W^{1,2}(0,T,H)\cap L^2(0,T;D(A))$    and fulfils \eqref{eA1}. Moreover, it results
\begin{equation}
\label{eA3}
|u^\prime|_{L^2(0,T;H)}\le2 |f|_{L^2(0,T;H)},\quad|Au|_{L^2(0,T;H)}\le |f|_{L^2(0,T;H)}.
\end{equation}
\end{Proposition}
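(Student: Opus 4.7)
The plan is to exploit the self-adjointness of $A$ together with the compactness of $(-A)^{-1+\beta}$ from Hypothesis \ref{h1}(i): the latter implies that $-A$ has compact resolvent, so there exist an orthonormal basis $\{e_k\}_{k\in\N}$ of $H$ and positive eigenvalues $\lambda_k\to\infty$ with $A e_k=-\lambda_k e_k$. Writing $f(t)=\sum_{k}f_k(t)e_k$ with $f_k(t)=\langle f(t),e_k\rangle_H\in L^2(0,T)$, the convolution defining $u$ decouples into a family of independent scalar convolutions
$$
u_k(t)=\int_0^t e^{-(t-s)\lambda_k} f_k(s)\,ds,\qquad u(t)=\sum_k u_k(t)\,e_k,
$$
and the whole statement reduces to a scalar estimate that is uniform in the parameter $\lambda_k>0$.

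For the bound on $Au$ I would apply Plancherel on the whole real line. Extending $f_k$ by zero outside $[0,T]$, one has $\lambda_k u_k=\bigl(\lambda_k e^{-\lambda_k\cdot}\,\one_{[0,\infty)}\bigr)\ast f_k$ on $\R$, whose Fourier transform equals $\frac{\lambda_k}{\lambda_k+i\tau}\,\widehat{f_k}(\tau)$. The multiplier is bounded in modulus by $1$, so Plancherel gives
$$
\|\lambda_k u_k\|_{L^2(0,T)}\le\|\lambda_k u_k\|_{L^2(\R)}\le\|f_k\|_{L^2(\R)}=\|f_k\|_{L^2(0,T)}.
$$
Summing in $k$ (Fubini is harmless since all terms are nonnegative) yields
$$
\int_0^T|Au(t)|_H^2\,dt=\sum_k\int_0^T \lambda_k^2|u_k(t)|^2\,dt\le\sum_k\int_0^T|f_k(t)|^2\,dt=\int_0^T|f(t)|_H^2\,dt,
$$
which is the first inequality in \eqref{eA3}. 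In particular, the uniform bound shows that $\sum_k u_k(t)e_k$ converges in $L^2(0,T;D(A))$, so $u\in L^2(0,T;D(A))$.

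Each scalar $u_k$ satisfies $u_k'(t)=-\lambda_k u_k(t)+f_k(t)$ in $L^2(0,T)$ with $u_k(0)=0$; reassembling in $H$ gives $u\in W^{1,2}(0,T;H)$, $u(0)=0$, and $u'=Au+f$ in $L^2(0,T;H)$, so that \eqref{eA1} holds. The second estimate in \eqref{eA3} is then immediate from the triangle inequality
$$
|u'|_{L^2(0,T;H)}\le|Au|_{L^2(0,T;H)}+|f|_{L^2(0,T;H)}\le 2|f|_{L^2(0,T;H)}.
$$
The only mild obstacle is to justify a posteriori that the termwise identities and the interchanges of series and integral are legitimate in $H$; this is taken care of by monotone convergence for the norm estimates and by the uniform maximal regularity bound just established, which forces convergence in the ambient function space.
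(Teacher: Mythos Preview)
Your proof is correct and follows essentially the same route as the paper: both extend $f$ by zero, apply Plancherel in the time variable, and use that the resulting Fourier multiplier has norm at most $1$ to obtain $|Au|_{L^2}\le|f|_{L^2}$, then deduce the bound on $u'$ from $u'=Au+f$. The only cosmetic difference is that you first diagonalize $A$ via the eigenbasis afforded by Hypothesis~\ref{h1}(i) and run the multiplier argument component-by-component, whereas the paper applies the operator-valued bound $\|A(k-A)^{-1}\|_{\mathcal L(H)}\le 1$ directly; the extra diagonalization step is harmless but unnecessary.
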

\begin{proof}
Denote by  $\hat f(k),\, k\in H,$ the Fourier transform of $f$  and by  $\hat u(k),\, k\in H,$ the Fourier transform of $u$.
($f$ and $u$ are extended by $0$ outside $[0,T]$).
Taking the Fourier transform on both sides of 
\eqref{eA2}  yields
$$
\hat u(k)=A(k-A)^{-1}\hat f(k),\quad k\in H.
$$
Since $\|A(k-A)^{-1}\|_{\mathcal L(H)} \le 1$ we have $|\hat u(k)|\le |\hat f(k)|$  for all $k\in H$
and the conclusion follows.
\end{proof}

Let us define the maximal regularity space
$\Gamma_{A}(H)$  by setting
 \begin{equation}
\label{eA4}
\Gamma_{A}(H)=L^2(0,T;D(A))\cap \{u\in W^{1,2}(0,T;H):\,u(0)=0\}.
\end{equation}
Then $\Gamma_{A}(H)$, endowed with the norm: 
$$
|u|^2_{\Gamma_{A}(H)}= \int_0^T\left( |u^\prime(t)|_H^2+|Au(t)|_H^2\right) \,dt,
$$
is a Hilbert space.

If $u=e^{\cdot A}*f$ we have
\begin{equation}
\label{eA5}
 |e^{\cdot A}*f|_{\Gamma_{A}(H)}\le c_1|f|_\X.
\end{equation}
Moreover, since it results
\begin{equation}
\label{eA6}
\Gamma_{A}(H)\subset  C([0,T;D((-A)^{1/2}),
\end{equation}
with continuous inclusion,
see \cite{BeDaDeMi06} and \cite{Lu18},
 there is $c_2>0$ such that
\begin{equation}
\label{eA7}
|((-A)^{1/2}u(T)|_H\le c_2\,|f|_\X.
\end{equation}

  \begin{Corollary}
\label{cA.2}
Let $\epsilon>0$, $f\in L^2(0,T;D((-A)^{\epsilon/2}))$. Then
 \begin{equation}
\label{eA.8}
u\in L^2(0,T;D((-A)^{1+\epsilon/2})\cap W^{1,2}(0,T;D((-A)^{\epsilon/2})).
\end{equation}
Moreover, the following continuous inclusion holds
\begin{equation}
\label{eA.9}
L^2(0,T;D((-A)^{1+\epsilon/2}))\cap W^{1,2}(0,T;D((-A)^{\epsilon/2}))\subset  C([0,T;(-A)^{\epsilon/2})).
\end{equation}
Finally,
\begin{equation}
\label{eA.10}
|Au|_{L^2(0,T;D((-A)^{\epsilon/2}))}\le |f|_{L^2(0,T;D((-A)^{\epsilon/2}))},\quad
|u^\prime|_{L^2(0,T;D((-A)^{\epsilon/2}))}\le |f|_{L^2(0,T;D((-A)^{\epsilon/2}))}.
\end{equation}
\end{Corollary}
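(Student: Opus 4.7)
The plan is to reduce the statement to Proposition \ref{pA1} by a functional-calculus shift, using that $A$ is self-adjoint and negative so $(-A)^{\epsilon/2}$ is a well-defined closed positive operator that commutes with $A$ and with each $e^{tA}$.

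Set $g(t) := (-A)^{\epsilon/2} f(t)$, which belongs to $L^2(0,T;H)$ by the hypothesis on $f$, and set $v(t) := (-A)^{\epsilon/2} u(t)$. Since $(-A)^{\epsilon/2}$ commutes with the semigroup, it may be brought inside the convolution integral, yielding
\begin{equation*}
v(t) = (-A)^{\epsilon/2}\int_0^t e^{(t-s)A} f(s)\,ds = \int_0^t e^{(t-s)A} g(s)\,ds = (e^{\cdot A} * g)(t).
\end{equation*}
Thus $v$ is precisely the mild solution of \eqref{eA1} with source $g\in L^2(0,T;H)$. Applying Proposition \ref{pA1} to $v$ gives $v\in W^{1,2}(0,T;H)\cap L^2(0,T;D(A))$ together with the estimates $|v'|_{L^2(0,T;H)}\le 2|g|_{L^2(0,T;H)}$ and $|Av|_{L^2(0,T;H)}\le |g|_{L^2(0,T;H)}$. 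Translating back via $u=(-A)^{-\epsilon/2}v$, this is exactly \eqref{eA.8} and, recalling the definition of the norm on $D((-A)^{\epsilon/2})$, it delivers the two estimates in \eqref{eA.10}.

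For the continuous inclusion \eqref{eA.9}, I would apply the embedding \eqref{eA6} to $v$, obtaining $v\in C([0,T];D((-A)^{1/2}))$ with continuous inclusion. Since $(-A)^{\epsilon/2}:D((-A)^{1/2+\epsilon/2})\to D((-A)^{1/2})$ is an isometric isomorphism (both spaces being defined via the spectral theorem with scale-compatible norms), this transfers to $u\in C([0,T];D((-A)^{1/2+\epsilon/2}))$ with continuous dependence on the $\Gamma_A$-type norm on the left-hand side of \eqref{eA.9}, which is what the statement asserts (modulo the evident typographical shorthand in the codomain).

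The only point that requires a word of care is the commutation of $(-A)^{\epsilon/2}$ with the convolution, i.e.\ the identity $(-A)^{\epsilon/2}\int_0^t e^{(t-s)A}f(s)\,ds=\int_0^t e^{(t-s)A}(-A)^{\epsilon/2}f(s)\,ds$. This follows because $(-A)^{\epsilon/2}$ is closed, the integrand $s\mapsto e^{(t-s)A}f(s)$ takes values in $D((-A)^{\epsilon/2})$ for a.e.\ $s$ (since $f(s)\in D((-A)^{\epsilon/2})$ and $e^{(t-s)A}$ preserves that domain), and $s\mapsto (-A)^{\epsilon/2}e^{(t-s)A}f(s)=e^{(t-s)A}g(s)$ is Bochner integrable in $H$; Hille's theorem then allows exchanging $(-A)^{\epsilon/2}$ with the integral. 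This is the only nontrivial technical step; everything else is a direct reapplication of Proposition \ref{pA1} at the shifted regularity level.
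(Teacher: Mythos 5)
Your argument is correct and is essentially the paper's proof, which simply applies Proposition \ref{pA1} with $H$ replaced by $D((-A)^{\epsilon/2})$; your conjugation $u\mapsto v=(-A)^{\epsilon/2}u$, $f\mapsto g=(-A)^{\epsilon/2}f$ is the same reduction made explicit via the isometric isomorphism $(-A)^{\epsilon/2}\colon D((-A)^{\epsilon/2})\to H$, with the Hille-theorem commutation step spelled out where the paper leaves it implicit.
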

\begin{proof}
It is sufficient to  apply 
Proposition \ref{pA1} replacing $H$ with $D((-A)^{\epsilon/2}).$
\end{proof}
 
 \section{A disintegration theorem}
  For the proof of the next result see, for instance, the appendix  in \cite{DLT}.\nor

%
%
%
 
\begin{Theorem}
\label{tA.1}
Let  $E$ be  a Polish space, $p\colon E\to H$ Borel, $\lambda\in \mathcal P(E)$ and $\mu= \lambda \circ p^{-1}$ the law of $p$. 
There exists a family of  Borel measures $(m_x)_{x\in H}$ 
in $(E,\mathcal B(E))$  such that
\begin{equation}
\label{eA.1}
\int_E \varphi(h)\lambda(dh)=\int_H \left(\int_{E} \varphi(h)m_x(dh)   \right)\,\mu(dx),
\end{equation}
for all $\varphi\colon E\to \R$ bounded and Borel.

Moreover the support of $m_x$
is included in $p^{-1}(x)$ for  $\mu$-almost all  $x\in H$, so that we can write \eqref{eA.1} as 
\begin{equation}
\label{eA.1c}
\int_{E} \varphi(h)\lambda(dh)=
 \int_H \left(\int_{p^{-1}(x)} \varphi(h)m_x(dh)   \right)\,\mu(dx).  
\end{equation}
\end{Theorem}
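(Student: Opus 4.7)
The plan is to build the disintegration $(m_x)_{x \in H}$ via regular conditional probabilities, exploiting that $E$ is Polish (hence standard Borel) and that its Borel $\sigma$-algebra $\mathcal{B}(E)$ is countably generated. First I fix a countable algebra $\mathcal{A}_0 \subset \mathcal{B}(E)$ generating $\mathcal{B}(E)$. For every $A \in \mathcal{A}_0$ the set function $B \mapsto \lambda(A \cap p^{-1}(B))$ on $\mathcal{B}(H)$ is a finite measure absolutely continuous with respect to $\mu = \lambda \circ p^{-1}$, so the Radon--Nikodym theorem yields a Borel density $q_A \colon H \to [0,1]$ satisfying $\lambda(A \cap p^{-1}(B)) = \int_B q_A(x)\,\mu(dx)$ for all $B \in \mathcal{B}(H)$.

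Next, the countable list of finite-additivity relations among elements of $\mathcal{A}_0$ provides, for each such relation, a $\mu$-null exceptional set in $H$; the union of these is still $\mu$-null, so outside a single $\mu$-null set $N_0 \subset H$ the map $A \mapsto q_A(x)$ is a finitely additive probability content on $\mathcal{A}_0$. The main obstacle is to upgrade finite to countable additivity, which is precisely where the Polish hypothesis enters essentially. Using Ulam's tightness theorem to approximate $\lambda$ from inside by compacts, together with inner-regular versions of the densities $q_A$, a further countable union of $\mu$-null sets produces $N \supset N_0$ outside of which $A \mapsto q_A(x)$ is countably additive on $\mathcal{A}_0$. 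Carathéodory's extension then defines a Borel probability $m_x$ on $E$ for $x \in H \setminus N$; on $N$ set $m_x$ to an arbitrary fixed probability measure, which will not affect what follows.

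To verify \eqref{eA.1}, check it first for $\varphi = \mathbf{1}_A$ with $A \in \mathcal{A}_0$, where it holds by the very construction of $q_A$; extend it to indicators of arbitrary Borel subsets of $E$ by the monotone class theorem, and then to bounded Borel $\varphi$ by linearity and dominated convergence. For the support property, fix a countable base $(U_n)_{n \in \N}$ of open sets of $H$; from
\begin{equation*}
\int_{U_n^c} m_x(p^{-1}(U_n))\,\mu(dx) = \lambda\bigl(p^{-1}(U_n) \cap p^{-1}(U_n^c)\bigr) = 0,
\end{equation*}
one obtains $m_x(p^{-1}(U_n)) = 0$ for $\mu$-a.e. $x \in U_n^c$. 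Discarding a countable union of $\mu$-null sets and using the separation identity $\{x\} = \bigcap_{n \colon x \in U_n} U_n$, the support of $m_x$ is contained in $p^{-1}(\{x\})$ for $\mu$-almost every $x$, which allows \eqref{eA.1} to be rewritten in the form \eqref{eA.1c}.
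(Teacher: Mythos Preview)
The paper does not actually prove this theorem: it simply refers the reader to the appendix of \cite{DLT}. Your proposal, by contrast, supplies a genuine argument---the classical construction of a regular conditional probability via Radon--Nikodym densities on a countable generating algebra, followed by a tightness argument to pass from finite to countable additivity---and this is the standard route to disintegration on Polish spaces. The outline is correct, with one small slip: in the support step you invoke the identity $\{x\} = \bigcap_{n\colon x \in U_n} U_n$, but what you actually need is $H\setminus\{x\} = \bigcup_{n\colon x \notin U_n} U_n$ (equivalently $\{x\}=\bigcap_{n\colon x\notin U_n}U_n^c$), so that the sets $p^{-1}(U_n)$ on which you have shown $m_x$ vanishes cover $p^{-1}(H\setminus\{x\})$. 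Both identities hold in a separable metric space such as $H$, so the conclusion is unaffected; just cite the correct one. It is also worth noting that what you establish is $m_x(p^{-1}(\{x\}))=1$ for $\mu$-a.e.\ $x$, which is precisely what is needed for \eqref{eA.1c}; since $p$ is only Borel the fibre $p^{-1}(\{x\})$ need not be closed, so speaking of the topological support lying in it is slightly informal, but the paper uses the same language.
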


\section*{Acknowledgments}  
We thank the referee for his careful reading of the article
and for his precise observations.\\
Further comments and references will be posted on the webpage

https://tubaro.maths.unitn.it\\
under the heading ``L'ultimo lavoro di Beppe Da Prato''.\\
The authors  thank also  Vladimir I. Bogachev,  Arnaud Debussche and Giorgio Metafune
for useful discussions. Enrico Priola  is  a member of  GNAMPA (INDAM) and his research activity was carried out as part of the PRIN 2022 project “Noise in fluid dynamics and
related models”.


\newpage

\end{document}